\theoremstyle{plain}
\newtheorem{lem}{Lemma}
\newtheorem{prop}[lem]{Proposition}
\newtheorem*{thm*}{Theorem}
\newtheorem{cor}[lem]{Corollary}
\newtheorem*{cor*}{Corollary}
\newtheorem*{thmA}{Theorem A}
\newtheorem*{thmB}{Theorem B}
\newtheorem*{corA'}{Corollary A'}
\theoremstyle{definition}
\newtheorem*{defn*}{Definition}
\newtheorem*{ex*}{Example}
\newtheorem{rem}[lem]{Remark}
\newtheorem*{rem*}{Remark}
\theoremstyle{remark}
\DeclareMathOperator{\diam}{diam}
\DeclareMathOperator{\dist}{dist}
\DeclareMathOperator{\Arg}{Arg}
\DeclareMathOperator{\dim_H}{dim_H}
\DeclareMathOperator{\length}{length}
\newcommand{\C}{\mathbb C}
\newcommand{\R}{\mathbb R}
\newcommand{\Z}{\mathbb Z}
\newcommand{\G}{\mathcal G}
\newcommand{\HH}{\mathcal H}
\newcommand{\KK}{\mathcal K}
\newcommand{\QQ}{\mathcal Q}
\newcommand{\LL}{\mathcal L}
\newcommand{\bd}{\partial}
\renewcommand{\Re}{\textup{Re}}
\renewcommand{\Im}{\textup{Im}}
\begin{document}

\title{Dimension properties of the boundaries of exponential basins}

\date{February 6, 2009}

\author{Krzysztof Bara\'nski}
\address{Institute of Mathematics, University of Warsaw,
ul.~Banacha~2, 02-097 Warszawa, Poland}
\email{baranski@mimuw.edu.pl}

\author{Bogus{\l}awa Karpi\'nska}
\address{Faculty of Mathematics and Information Science, Warsaw
University of Technology, Pl.~Politechniki~1, 00-661 Warszawa, Poland}
\email{bkarpin@mini.pw.edu.pl}

\author{Anna Zdunik}
\address{Institute of Mathematics, University of Warsaw,
ul.~Banacha~2, 02-097 Warszawa, Poland}
\email{A.Zdunik@mimuw.edu.pl}

\subjclass[2000]{Primary 37F10, 37F35, 30D40, 28A80.}

\thanks{Research supported by Polish MNiSW Grant N N201 0234 33 and
EU FP6 Marie Curie Programmes RTN CODY and ToK SPADE2. The second author is
supported by Polish PW Grant 504G 1120 0011 000.}

\begin{abstract} We prove that the boundary of a component $U$ of the
  basin of an attracting periodic cycle (of period greater than $1$) for an
  exponential map on the complex plane has Hausdorff 
  dimension greater than $1$ and less than $2$. Moreover,
the set of points in the boundary of $U$ which do not escape to infinity
has Hausdorff dimension (in fact: hyperbolic dimension) greater than
$1$, while the set of points in
  the boundary of $U$ which escape to infinity has Hausdorff dimension $1$.
\end{abstract}

%\keywords{Julia set; Hausdorff dimension; hyperbolic dimension;
%transcendental dynamics}

\maketitle

\section{Introduction}\label{sec:intro}

Let
\[
f(z) = \lambda \exp(z)
\]
for $\lambda \in \C \setminus \{0\}$, $z \in \C$  be an exponential map with an
attracting periodic cycle of period $p$. Then the Julia set $J(f)$ is
equal to the boundary of the (entire) basin $B$ of attraction to this
cycle. We are interested in dimension properties of this
boundary. We consider also the escaping set
\[
I(f) = \{z\in \C: f^{n}(z) \to \infty \text{ as } n \to \infty\}
\]
and the set of points in the Julia set with bounded forward
trajectories
\[
J_\text{bd}(f) = \{z \in J(f): \{f^n(z)\}_{n = 0}^\infty \text{ is bounded}\}.
\]
In our context, both set are contained in the boundary of $B$. 

In the well-known case $p = 1$, the attracting basin consists of a
unique simply connected component $U$. As proved in \cite{McM}, the
Hausdorff dimension $\dim_H$ of its boundary $\bd U = J(f)$ is then equal to
$2$. In fact, it follows from the proof in \cite{McM} that $\dim_H(\bd
U) = \dim_H(\bd U \cap I(f)) = 2$. On the other hand, we have $1 < \dim_H(\bd U
\cap J_\text{bd}(f)) \leq \dim_H(\bd U
\setminus I(f)) < 2$ (see \cite{K1, UZfiner}). A refined analysis
of the dimension of subsets of $I(f)$ according to the speed of convergence to
$\infty$ can be found in \cite{KU}. Hence, it follows from the above
results that in the case $p = 1$ the dimension of $\bd U$ is carried
by the set of escaping points.

Consider now the case $p > 1$. Then the basin $B$ of the attracting cycle 
consists of infinitely many disjoint simply connected components. Again,  
the Hausdorff dimension of the boundary of $B$ is equal to $2$ (see
\cite{McM}). However, this does not imply that 
the boundaries of components of $B$ have Hausdorff
dimension $2$. In fact, in this paper we prove the following theorem.  

\begin{thmA} Let $p > 1$ and let $U$ be a component of the basin of
the attracting cycle for the map $f$. Then the boundary of $U$ has Hausdorff
dimension greater than $1$ and less than $2$.
\end{thmA}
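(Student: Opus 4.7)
The plan is to derive Theorem A from two complementary estimates on $\partial U$, corresponding to the escaping and non-escaping parts announced in the abstract. Both estimates rest on a preliminary reduction. Since $f$ has a single asymptotic value $0$ and no critical points, every non-cycle component of $B$ is mapped conformally by a finite iterate of $f$ onto a cycle component, and Koebe distortion transfers dimension bounds in both directions. So it suffices to prove the estimates for a cycle component $U = U_j$. By moving to a forward or backward iterate within the cycle, we may further assume that $U$ does not contain the asymptotic tract, so that the first return map $F = f^p$ is conformal in a neighborhood of $\overline U$, with $F(U) = U$ and an attracting fixed point inside. The set $\partial U$ is then forward-invariant under the conformal dynamical system $F$, and is expanding off the attracting fixed point.

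For the upper bound $\dim_H(\partial U) < 2$ I would decompose $\partial U = (\partial U \cap I(f)) \sqcup (\partial U \setminus I(f))$. The first piece has Hausdorff dimension exactly $1$ by the companion theorem (announced in the abstract), proved via a Karpi\'nska-type partition of $\partial U$ into dynamic rays and a Hausdorff-measure estimate on the tails of those rays that escape to infinity. For the second piece, the non-escaping points on $\partial U$ lie in $J_\text{bd}(f)$, and an Urba\'nski--Zdunik-type argument in the spirit of \cite{UZfiner} produces a conformal measure of dimension $t < 2$ supported there, using pressure-function techniques adapted from rational dynamics. In particular $\dim_H(\partial U \setminus I(f)) < 2$, and combining the two pieces gives $\dim_H(\partial U) < 2$.

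For the lower bound $\dim_H(\partial U) > 1$ I would construct a conformal iterated function system inside $\partial U \cap J_\text{bd}(f)$ whose limit set has Hausdorff dimension strictly greater than $1$. Repelling periodic points of $F$ are dense in $\partial U$; I would pick two distinct repelling periodic orbits, a small disc $D$ meeting both, and two well-separated inverse branches $g_1, g_2 : D \to D$ of a common iterate $F^k$. The limit set $\Lambda \subset \partial U$ of $\{g_1, g_2\}$ has Hausdorff dimension given by Bowen's pressure formula as the unique zero of $t \mapsto P(-t \log |(F^k)'|)$. The value $\dim_H \Lambda \geq 1$ is automatic (Makarov), so the content is the strict inequality; this follows provided $\Lambda$ is not contained in any real-analytic curve, or equivalently provided $\partial U$ is not a round circle. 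Non-circularity of $\partial U$ for $p > 1$ is in turn forced because the multipliers of the repelling periodic points on $\partial U$ vary in a way incompatible with the rigidity of a M\"obius conjugacy to a disk automorphism.

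The core difficulty is the strict lower bound. Two things must be done carefully: (i) verifying that $F$ admits a genuine hyperbolic subset in $\partial U$ with enough room to build a non-trivial iterated function system, delicate because $f$ is transcendental and because the "bad" cycle component $U_0$, where $F$ degenerates into a universal covering map, has to be handled separately via the conformal reduction to another $U_j$; and (ii) ruling out that the resulting limit set lies on a real-analytic arc, which is the quantitative form of the assertion that $\partial U$ is not a circle and is what upgrades the Bowen dimension from $\geq 1$ to $> 1$. The upper bound, by contrast, is a relatively direct consequence of the two companion theorems on the escaping and non-escaping parts of $\partial U$.
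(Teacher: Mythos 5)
Your upper-bound plan (split $\partial U$ into escaping and non-escaping parts, get dimension $1$ for the former and cite \cite{UZfiner} for $\dim_H(J(f)\setminus I(f))<2$) is essentially the paper's route. But your lower-bound argument is genuinely different from the paper's and contains gaps that I do not see how to fill.

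First, the preliminary reduction is not available. For every cycle component $U_j$, the return map $f^p|_{U_j}$ factors through the map $f: U_0 \to U_1$, which is a universal covering (infinite-to-one), because $U_0$ absorbs the asymptotic tract; hence $f^p$ is never injective on $U_j$, let alone conformal on a neighborhood of $\overline{U_j}$. The paper handles this by working symbolically with the kneading block $\underline{k}=(k_1,\ldots,k_{p-1})$: going around the cycle is encoded by the forced block $\underline{k}$, while the universal-covering step at $U_0$ produces a free integer $s$, leading to the infinite family of inverse branches $G_{\underline{k},s}$ and the characterization of $\partial U_0$ in Proposition~\ref{kody}. Without some substitute for this, your statement that ``$\partial U$ is forward-invariant under the conformal dynamical system $F$'' is not accurate.

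Second, and more seriously, the two key steps of your lower bound fail as stated. (i) Makarov's theorem says that harmonic measure on a simply connected domain has dimension $1$, which gives $\dim_H(\partial U)\ge 1$; it does \emph{not} give $\dim_H\Lambda\ge 1$ for the limit set $\Lambda$ of an arbitrary two-branch IFS inside $\partial U$. Two strongly contracting inverse branches produce a Cantor set of Hausdorff dimension close to $0$, regardless of the ambient domain; nothing about being a subset of $\partial U$ prevents this. (ii) The implication ``$\Lambda$ not contained in a real-analytic curve $\Rightarrow \dim_H\Lambda>1$'' is not a theorem. The rigidity result you have in mind (Przytycki--Urba\'nski--Zdunik) concerns a \emph{compact} conformal repeller that carries an ergodic invariant measure of dimension $1$ coming from harmonic measure on a simply connected domain; it compares a harmonic measure supported on all of $\partial U$ with a conformal measure, and the conclusion is about $\partial U$ itself, not about a small Cantor piece. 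Here $\partial U$ is unbounded, not a compact repeller, and the uniform expansion/bounded distortion hypotheses of that machinery are not verified. So ``non-circularity'' of $\partial U$, even if established via multipliers, does not by itself upgrade any Bowen dimension from $\ge 1$ to $>1$.

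The paper's lower bound is instead a direct pressure computation that exploits the asymptotics of $f$ near infinity: take a large square $Q=[R/2,3R/2]^2$, form the inverse branches $G_{u,\underline{k}}\circ G_{s,\underline{k}}$ that map $Q$ into itself, estimate $|G'_{s,\underline{k}}(z_R)|\gtrsim (R(\ln R)^{p-1})^{-1}$ while roughly $R$ curves $G_{u,\underline{k}}(\ell')$ cross $Q$, each carrying $\gtrsim R$ of the points $G_{u,\underline{k}}(v_s)$, so that the sum $\sum_{(u,s)\in\mathcal G}|(G_{u,\underline{k}}\circ G_{s,\underline{k}})'(z_R)|\gtrsim R/(\ln R)^{p-1}\to\infty$ as $R\to\infty$. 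This makes $P(1)>0$ for the finite IFS, hence $\dim_H>1$ by Bowen's formula, with no appeal to harmonic measure or rigidity. You would need to replace your steps (i) and (ii) with something of this quantitative kind; as written, the lower bound is not proved.
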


This shows that for $p > 1$ the dimension of $\bd U$ is carried by
points in the Julia set, which are not in the boundary of any component
of the Fatou set (so-called buried points). By Theorem~A, we
immediately obtain:

\begin{corA'} If $p > 1$, then the set of buried points for the map
$f$ has Hausdorff dimension $2$.
\end{corA'}

Surprisingly, for $p > 1$ the role of escaping and non-escaping points
in $\bd U$ from dimensional point of view is quite different than in
the case $p = 1$. This is shown in the second result proved in this paper. 

\begin{thmB} If $p > 1$ and $U$ is a component of the basin of
the attracting cycle for the map $f$, then
\[
1 < \dim_H(\bd U \cap J_\text{\rm bd}(f)) \leq \dim_H(\bd U
\setminus I(f)) =  \dim_H(\bd U) < 2,
\]
while 
\[
\dim_H(\bd U \cap I(f)) = 1.
\]
\end{thmB}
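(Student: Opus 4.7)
By Theorem~A we already know $1 < \dim_H(\bd U) < 2$. Since $\bd U \cap J_\text{\rm bd}(f) \subset \bd U \setminus I(f)$, the middle inequality in the displayed chain is trivial. Decomposing $\bd U = (\bd U \cap I(f)) \cup (\bd U \setminus I(f))$ and using finite stability of Hausdorff dimension, the equality $\dim_H(\bd U \setminus I(f)) = \dim_H(\bd U)$ will follow as soon as we show $\dim_H(\bd U \cap I(f)) = 1$, because then the escaping piece is strictly smaller in dimension than $\bd U$. So the entire statement reduces to proving (I) $\dim_H(\bd U \cap J_\text{\rm bd}(f)) > 1$, and (II) $\dim_H(\bd U \cap I(f)) = 1$.

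\textbf{Plan for (I).} The plan is to produce a conformal expanding (hyperbolic) Cantor repeller $X \subset \bd U$ with $\dim_H X > 1$. Because every point of a hyperbolic invariant set for a holomorphic map has uniformly bounded trajectory, such an $X$ automatically lies in $J_\text{\rm bd}(f)$, yielding (I). To build it I would choose a bounded piece of $\bd U$ on which countably many inverse branches of an iterate $f^N$ act as conformal contractions, giving a Markov-type induced IFS along $\bd U$, and then restrict to a finite subfamily whose topological pressure satisfies $P(-t \log |f'|) > 0$ at $t = 1$. Bowen's formula then gives $\dim_H X > 1$. This is essentially the same mechanism that is used in the proof of Theorem~A to obtain $\dim_H(\bd U) > 1$, and (I) should fall out of that construction with little extra effort.

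\textbf{Plan for (II).} For the lower bound it is enough to exhibit a single nontrivial arc of escaping points in $\bd U$: since $U$ is a simply connected Fatou component accessed from $\infty$, its boundary contains (a tail of) a dynamic ray of $f$, whose non-endpoint portion consists of escaping points and gives an arc of dimension $\ge 1$. For the upper bound I would use the standard structural description of the escaping set of exponential maps (Devaney--Krych, Schleicher--Zimmer): every point of $I(f)$ lies on a dynamic ray (``hair''), an injective curve to $\infty$ of Hausdorff dimension $1$. It then remains to show that only \emph{countably} many rays can meet $\bd U$. This exploits the fact that $U$ has only countably many accesses to $\infty$, namely those coming from preimages under iterates of $f$ of the (finitely many) accesses from the immediate basin components; the corresponding external addresses form a countable, dynamically defined family. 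Hence $\bd U \cap I(f)$ is contained in a countable union of $1$-dimensional hairs and thus has dimension~$1$.

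\textbf{Main obstacle.} The hardest step is the countability argument in the upper bound of (II). The full set of external addresses is uncountable and generically produces escaping points, so one has to exploit the rigid combinatorial structure of $\bd U$ --- essentially the fact that a single Fatou component of the period-$p > 1$ basin has only countably many ends in $\C$, each associated with a definite itinerary on the Markov partition induced by preimages of $U_0,\ldots,U_{p-1}$ --- to rule out all but a countable family of rays. The Cantor-repeller construction for (I) is more routine once the machinery underlying Theorem~A is available.
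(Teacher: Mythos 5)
Your plan for (I) matches the paper: the paper constructs a finite conformal IFS (out of branches of the form $G_{u,\underline{k}}\circ G_{s,\underline{k}}$) with an invariant Cantor repeller $X\subset\bd U_0$ satisfying $f^{2p}(X)=X$, and applies Bowen's formula to show $\dim_H X>1$. Such a hyperbolic set has bounded orbits, so (I) follows, and in fact one obtains the stronger hyperbolic-dimension statement.

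Your plan for (II), however, has a genuine gap at the step you yourself identify as the crux. The claim that only countably many dynamic rays can meet $\bd U$ is false. By Proposition~\ref{kody} in the paper, every point of $J(f)$ whose itinerary lies in
\[
\Sigma'_{\underline{k}} = \bigl\{(s_0,\underline{k},s_1,\underline{k},\ldots):\ s_n\notin\{k_1,\ldots,k_{p-1}\}\ \text{for infinitely many }n\bigr\}
\]
belongs to $\bd U_0$. This is an uncountable family of itineraries, and each itinerary corresponds to (at least) one hair; moreover the points of $\Sigma'_{\underline{k}}$-itinerary include plenty of escaping points (choose $s_n\to\infty$ quickly), so $\bd U_0\cap I(f)$ already meets uncountably many hairs. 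The set $\bd U_0$ is not a ``small'' boundary accessed only through countably many ends; it has Hausdorff dimension greater than $1$ precisely because it receives uncountably many hairs with their endpoints. There is a second, related problem: even granting a reduction to hairs, escaping \emph{endpoints} of hairs can lie in $\bd U_0$, and it is exactly the endpoints (escaping or not) that carry dimension $2$ in $J(f)$. So the bound has to deal with escaping endpoints in $\bd U_0$, not just with the non-endpoint portions of rays. Your proposed route does not address this.

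What the paper does instead is a direct covering argument. It shows (Proposition~\ref{uciekaj}) that the set of points with itinerary in $\Sigma_{\underline{k}}$ (equivalently, in $\tilde\Sigma_{\underline{k}}$, up to countably many preimages) and escaping to $\infty$ has Hausdorff dimension $1$. The proof restricts to $A_M$, the subset whose full forward orbit stays in the half-plane $W_M=\{\Re z\ge M\}$, tiles the relevant region by boxes $Q^r_{k_1}$ and their inverse images $\KK_n$ under compositions of $G_{\underline{k},s}$, and shows via the derivative estimate $|G'_{\underline{k},s}(z)|<\bigl(\Re(z)(\pi|s|+1)\bigr)^{-1}$ that for any $\delta>0$ and $M$ large, $\sum_{K\in\KK_n}(\diam K)^{1+\delta}<1$. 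This bounds $\dim_H A_M<1+\delta$ and hence $\dim_H(\bd U_0\cap I(f))=1$. This estimate applies uniformly to all symbolic itineraries in $\Sigma_{\underline{k}}$ regardless of whether the points are on countably or uncountably many hairs, and it handles escaping endpoints and non-endpoints alike. To repair your argument you would need to replace the countability claim with such a measure estimate (or invoke a sufficiently strong result about escaping points constrained to a sub-shift), as countable-union reasoning alone cannot work.

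One smaller remark: deriving $\dim_H(\bd U)<2$ from ``Theorem~A'' in your reduction is slightly circular, since in the paper the $<2$ half of Theorem~A is itself obtained from $\dim_H(\bd U\cap I(f))=1$ together with the external input $\dim_H(J(f)\setminus I(f))<2$ from Urba\'nski--Zdunik. You should invoke the latter explicitly rather than Theorem~A.
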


\begin{rem*}
In fact, the proof of Theorem~B shows that $\bd U \cap J_\text{bd}(f)$
has hyperbolic dimension greater than $1$. 
\end{rem*}

Therefore, in the case $p > 1$ the dimension of $\bd U$ is carried by
the set of non-escaping points. 

\begin{rem*} It is easy to check that for $z \in \bd U$ we have
\[
f^{n}(z) \xrightarrow[n \to \infty]{} \infty \iff f^{np}(z)
\xrightarrow[n \to \infty]{} \infty
\]
(see Lemma~\ref{lem:Re}).
\end{rem*}

Recall that $J(f)$ consists of
so-called hairs homeomorphic to $[0, \infty)$, tending to $\infty$,
composed of points with 
given symbolic itineraries. All points from a hair outside its
endpoint (the point corresponding to $0$ in $[0, \infty)$) are
contained in $I(f)$. In the case $p = 1$ the hairs are pairwise
disjoint and the Julia set is homeomorphic to a so-called straight
brush (see \cite{AO}), while in the case $p > 1$ some hairs have
common endpoints and the Julia set is a modified straight
brush (see \cite{BDLKM}). In both cases the Hausdorff dimension of
the union of hairs without endpoints is equal to $1$ (see \cite{K2,
SZ}). Hence, the hairs without endpoints are insignificant from
dimensional point of view.

The plan of the paper is as follows. In Section~\ref{sec:fingers} we
describe combinatorics of exponential maps with a periodic basin of
attraction and provide a symbolic description of the boundaries of its
components (Proposition~\ref{kody}). Then, in Section~\ref{sec:esc} we
prove that the set of 
escaping points in the boundary $U$ has Hausdorff dimension $1$. In
Section~\ref{sec:non-esc} it is shown that the set of points in the
boundary of $U$ with
bounded trajectories has hyperbolic dimension 
greater than $1$. Finally, Section~\ref{sec:concl} concludes the proof.  

\section{Combinatorics of exponentials with attracting periodic
  basins}\label{sec:fingers} 

The combinatorics of the Julia sets of exponential maps with an
attracting periodic cycle of period $p > 1$ was described in \cite{BD,
  BDLKM}. For completeness, we 
briefly recall the construction from these papers.

Let $z_0, \ldots, z_p = z_0$ be the attracting cycle and let $U_0,
\ldots, U_p = U_0$ be the components of the basin of attraction, such that $z_j
\in U_j$. Of course, we have $f^p(U_j) \subset U_j$. All $U_j$ are
simply connected (see e.g.~\cite{Ba}). Since 
$0$ is the only singular value of $f$, it must be in a 
component containing some $z_j$, so we can assume $0 \in
U_1$. It is easy to find a simply connected domain $B_{p + 1} \subset
U_1$ such that $0, z_1 \in B_{p + 1}$, $\bd B_{p + 1}$ is a Jordan
curve in $U_1$ and $\overline{f^p(B_{p + 1})} \subset B_{p + 1}$.
Let $B_p = f^{-1}(B_{p + 1})$. Then $B_p$ is simply
connected, contains some left-half plane and $\bd B_p$ is a
curve homeomorphic to the real line, 
contained in $\{z \in \C: a < \Re(z) < b\}$ for some $a, b \in
\R$. This curve is periodic of period $2\pi i$ and its two ``ends''
have imaginary parts tending respectively to $\pm \infty$.  

For $j = p - 1, \ldots, 0$ define successively $B_j$
to be the component of $f^{-1}(B_{j + 1})$ containing $z_j$. The
construction implies that $f$ maps univalently $B_j$ onto $B_{j
  + 1}$ for $j = 1, \ldots, p - 1$, such that $B_1 \supset
\overline{B_{p + 1}}$, and $f$ on $B_0 = f^{-1}(B_1)$ is a universal covering
of $B_1 \setminus \{0\}$. 

Let $C_p \subset B_p$ be a domain
containing $z_0$ and some left half-plane $\{z: \Re(z) < c\}$, such
that $\bd C_p$ is a curve homeomorphic to the 
real line, its two ``ends'' have imaginary parts tending to $\pm
\infty$, moreover $\bd C_p$ coincides with $\bd B_p$ on $\{z: |\Im(z)|
< L_1\}$ and is vertical on $\{z: |\Im(z)| > L_2\}$ for some large constants
$L_1, L_2 > 0$.    

Define successively $C_j \subset B_j$ to be the component of
$f^{-1}(C_{j + 1})$ 
containing $z_j$ for $j = p - 1, \ldots, 0$. Again, $f$ maps
univalently $C_j$ onto $C_{j + 1}$ and $f$ on $C_0 = f^{-1}(C_1)$ is a
universal 
covering of $C_1 \setminus \{0\}$. Moreover, since
$\overline{B_{p + 1}}$ is compact, we have $\overline{B_{p + 1}}
\subset C_1$ if the constant 
$L_1$ is chosen sufficiently large. This implies
\[
\overline{C_j} \subset U_j \quad \text{for } j = 0, \ldots, p -
1 \qquad \text{and} \qquad \overline{C_p} \subset C_0.
\]
In particular, $\overline{C_0}, \ldots, \overline{C_{p - 1}}$ are
pairwise disjoint. Since $C_0$ contains some 
left half-plane and $0 \notin C_0$, the set $\C \setminus C_0$ consists of
infinitely many disjoint simply connected, closed, connected sets
$\HH_s$, $s \in \Z$, such that 
\begin{equation} \label{eq:H_s}
\HH_s = \HH_0 + 2 \pi i s
\end{equation}
Similarly, for $j = 1, \ldots, p - 1$ the set  
$\overline C_j$ is a simply connected, closed, connected subset of
$\bigcup_s\HH_s$, such that
\begin{equation}\label{eq:k_j}
z_j \in C_j \subset U_j \subset \HH_{k_j} \qquad \text{for } j = 1,
\ldots p - 1,
\end{equation}
for some $k_j \in \Z$. See Figure~\ref{fig:c}.

\begin{figure}[!ht]
\begin{center}
\includegraphics*[height=10cm]{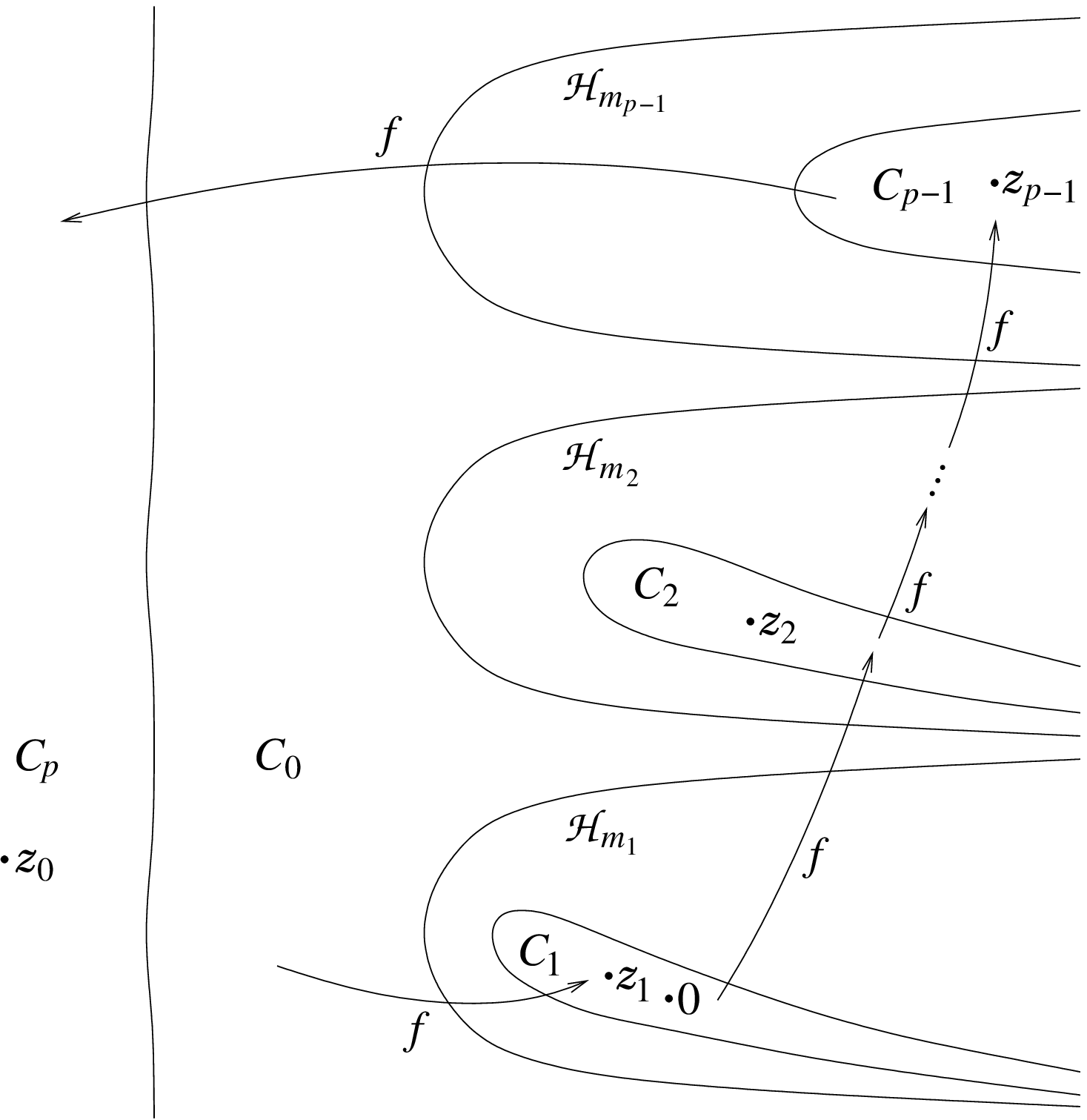}
\caption{The sets $C_j$.}
\label{fig:c}
\end{center}
\end{figure}

The following lemma is essentially stated as Proposition~2.10 in \cite{BD}.

\begin{lem} \label{lem:graphs} For $s \in \Z$, $j = 1, \ldots, p - 1$,
  we have $\bd\HH_s = \Gamma_s$, $\bd C_j = \gamma_j$, 
where $\Gamma_s, \gamma_j: \R \to \C$ are curves homeomorphic to the 
real line, such that 
\[
\Re(\Gamma_s(t)), \Re(\gamma_j(t)) \xrightarrow[t \to \pm \infty]{} +\infty.
\]
Moreover, for sufficiently large $M > 0$, in the right
  half-plane $\{z: \Re(z) \geq M\}$ the
  curves $\Gamma_s$, $\gamma_j$ form pairs of the graphs of smooth
  functions $y = H^\pm_s(x)$, $y = h^\pm_j(x)$, $x \geq M$, where $(x,
  y)$ are Euclidean coordinates on the plane and
\begin{align*}
\lim_{x \to +\infty} H^\pm_s(x) &= (2s + 1 \pm 1)\pi -
\Arg(\lambda),\\
 \lim_{x \to
  +\infty} h^\pm_{p - 1}(x) &= (2 k_{p - 1} + 1 \pm 1/2)\pi - \Arg(\lambda),\\ 
\lim_{x \to +\infty} h^\pm_j(x) &= 2\pi k_j - \Arg(\lambda) \quad \text{for
} j = 1, \ldots, p - 2
\end{align*}
for some branch of argument. Consequently, 
\[
\HH_s \cap \{z: \Re(z) \geq M\} \subset \{z: (2s - 1)\pi - \Arg (\lambda)
< \Im(z) < (2s + 3)\pi - \Arg (\lambda)\}. 
\]
\end{lem}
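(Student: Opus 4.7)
The plan is to prove the lemma by descending induction on $j$, beginning with the explicit vertical form of $\gamma_p = \bd C_p$ and pulling back along the univalent maps $f\colon C_{j-1}\to C_j$ to obtain $\gamma_{p-1},\ldots,\gamma_1$. The statement for $\Gamma_s = \bd\HH_s$ is then obtained as a last step from the fact that $f|_{C_0}\colon C_0\to C_1\setminus\{0\}$ is a universal covering.

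By construction, $\gamma_p$ is vertical outside $\{|\Im z|\leq L_2\}$: $\gamma_p(t) = c_\pm + it$ for $|t|$ large, so $\arg(\gamma_p(t))\to\pm\pi/2$ as $t\to\pm\infty$. For $j\in\{2,\ldots,p\}$, univalence gives $\gamma_{j-1}(t) = \log(\gamma_j(t)/\lambda) + 2\pi i\, k_{j-1}$ along the continuous branch of $\log$ pinned down by $z_{j-1}\in C_{j-1}$. Hence $\Re(\gamma_{j-1}(t))\to+\infty$, while the $\Im$-asymptotes come from the continuous branch of $\arg\gamma_j(t)$ at the ends. For $j = p$ this produces the asymptotes $(2k_{p-1} + 1 \pm 1/2)\pi - \Arg\lambda$ of $h^\pm_{p-1}$. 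For $j\in\{p-1,\ldots,2\}$ one has $0\notin C_j$ (since $0\in U_1$ and $j\ne 1$); combined with the inductive fact that $\Re(\gamma_j(t))\to+\infty$ with bounded $\Im(\gamma_j(t))$, this forces $\arg\gamma_j(t)$ to have total variation $0$ (the curve does not wind around $0$), giving $\arg(\gamma_j(t))\to 0$ at both ends. So $\Im(\gamma_{j-1}(t))\to 2\pi k_{j-1} - \Arg\lambda$ at both ends, matching $h^\pm_{j-1}$ for $j-1\in\{1,\ldots,p-2\}$.

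For $\Gamma_s$, the universal covering structure gives $\bd C_0 = f^{-1}(\gamma_1)$ as a disjoint union of $2\pi i$-translates of a single lift of $\gamma_1$; the $\HH_s = \HH_0 + 2\pi i s$ are indexed correspondingly, with $\Gamma_s$ one such lift. The crucial observation is $0\in C_1$: so $\gamma_1\cup\{\infty\}$ is a Jordan curve in $\widehat\C$ separating $0$ from $\infty$, forcing the total variation of $\arg\gamma_1(t)$ to equal $+2\pi$ (with the sign determined by traversing $\gamma_1$ so that $C_1$ lies on the left). Hence each lift has imaginary parts at the two ends separated by exactly $2\pi$; labelling the lift with lower asymptote $2s\pi - \Arg\lambda$ as $\Gamma_s$ gives $H^\pm_s(x)\to (2s + 1 \pm 1)\pi - \Arg\lambda$.

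Finally, the smooth-graph property in $\{\Re z\geq M\}$ for large $M$ follows from the tangent to each curve being proportional to $\gamma_j'(t)/\gamma_j(t)$, whose imaginary-to-real ratio tends to $0$ as $|\gamma_j(t)|\to\infty$: each end is thus asymptotically horizontal with monotone $\Re$-coordinate. The containment $\HH_s\subset\{(2s-1)\pi - \Arg\lambda < \Im z < (2s+3)\pi - \Arg\lambda\}$ then follows by locating $\HH_s$ between the two graphs of $\Gamma_s$ for $\Re z\geq M$ and using the bounded middle portion of $\Gamma_s$ to absorb the extra $\pi$ of slack on each side. The main delicate point is the winding-number computation for $\gamma_1$, which hinges on the contrast between $0\in C_1$ (giving $\Delta\arg\gamma_1 = 2\pi$) and $0\notin C_j$ for $j\ne 1$ (giving $\Delta\arg\gamma_j = 0$).
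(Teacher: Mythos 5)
Correct, and essentially the same approach as the paper's proof: both start from the vertical rays of $\partial C_p$ and pull back along the inverse branches $z\mapsto\log(z/\lambda)$, reading off the imaginary-part asymptotes from the argument of the previous curve. Your explicit winding-number dichotomy ($0\in C_1$, giving $\Delta\arg\gamma_1=\pm 2\pi$, versus $0\notin C_j$ for $j\ge 2$, giving $\Delta\arg\gamma_j=0$) spells out a step the paper compresses into ``this easily implies \dots\ the required asymptotics,'' and it is exactly what makes the $\Gamma_s$-ends differ by $2\pi$ while the $\gamma_j$-ends ($j\le p-2$) share a common asymptote.
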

\begin{proof} By construction, $\gamma_{p - 1} \cap \{z: \Re(z) \geq
  M\}$ for large $M$ is a pair of curves, which are images of
  vertical half-lines under some branch of logarithm. Hence, these curves
  are the graphs of some smooth functions $h^\pm_{p - 1}(x)$, such
  that $\lim_{x \to +\infty} h^\pm_{p - 1}(x) = (2 k_{p - 1} + 1 \pm
  1/2)\pi - \Arg(\lambda)$ and 
  $|(h^\pm_{p - 1})'| < \varepsilon$ for a small $\varepsilon > 0$.  
Moreover, the
  curves $\gamma_j$, $j = p - 2, \ldots, 1$ and $\Gamma_s$ are
  successive images of $\gamma_{p - 1}$
  under inverse branches of $f$, which have the form
  $\log(z/\lambda)$ for some branch of logarithm. Note also
  that $|\Arg(z)|, |\Arg(\log'(z/\lambda))| < 
  \varepsilon$ for every $z \in 
  \bigcup_{j = 1}^{p - 1} \gamma_j$, provided
  $\Re(z) \geq M$ for large $M$. This easily implies that $\gamma_j$ and
  $\Gamma_s$ on some right-half plane are pairs of smooth curves with almost
  horizontal tangent vector and the required asymptotics. 
\end{proof}

Since $\C \setminus C_1$ is simply connected and does not
contain $0$, we can define branches of $f^{-1}$,
\[
g_s: \C \setminus C_1 \to \HH_s, \qquad s \in \Z,
\]
mapping $\C \setminus C_1$ onto $\HH_s$. Let
\[
\LL = \C \setminus \bigcup_{j = 1}^p C_j
\]
By construction, we have $\LL \subset f(\LL)$, so 
\[
g_s(\LL) \subset \LL.
\]
Hence, the collection of maps
$\{g_s|_\LL\}_{s \in\Z}$ forms an infinite conformal iterated function
system.

Since the Julia set is the complement of the basin of
attraction, for each $z \in J(f)$ all its forward
iterates are contained in $\LL \setminus C_0 \subset \bigcup_s \HH_s
$, so following \cite{BD}, we can define its itinerary $(s_0, 
s_1, \ldots)$, taking $s_n 
\in \Z$ such that $f^n(z) \in \HH_{s_n}$, $n = 0, 1, \ldots$. It 
is clear that
\[
J(f) = \bigcap_{n = 0}^\infty \bigcup_{s_0, \ldots, s_n} g_{s_0} \circ \cdots
\circ g_{s_n}(\LL),
\]
and $\bigcap_{n =  0}^\infty g_{s_0} \circ \cdots \circ
g_{s_n}(\LL)$ is precisely the set of points in the Julia set with
itinerary $(s_0, s_1, \ldots)$. 

Let
\[
\underline{k}  = (k_1, \ldots, k_{p - 1})
\]
be the block of $p - 1$ symbols, where $k_1, \ldots, k_{p - 1}$ are
integers defined in \eqref{eq:k_j}. See Figure~\ref{fig:c}. The block
$\underline{k}$ describes the trajectory of the singular value $0$ and
is
called the kneading sequence for $f$ (see \cite{BD}). 
We will write
\[
G_{\underline{k}} = g_{k_1} \circ \cdots \circ g_{k_{p - 1}}, \quad 
G_{s,\underline{k}} = g_s \circ g_{k_1} \circ \cdots \circ g_{k_{p - 1}}, \quad
G_{\underline{k},s} = g_{k_1} \circ \cdots \circ g_{k_{p - 1}} \circ g_s
\]
for $s \in \Z$.

The following lemma is a kind of folklore (it follows from
estimations of hyperbolic metric, see e.g. the proofs of
Proposition~6.1 in \cite{McM} and Proposition~2.1 in \cite{UZreal}).

\begin{lem}\label{lem:expan} There exist $c > 0$, $A > 1$, such that
  for every $n > 0$, $s_0, \ldots, s_n \in \Z$ and $z \in \LL$,
\[
|(g_{s_0} \circ \cdots \circ g_{s_n})'(z)| < cA^{-n}.
\]
\qed
\end{lem}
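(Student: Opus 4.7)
The plan is to combine the explicit formula $|g_s'(z)| = 1/|z|$ with uniform expansion of $f$ coming from hyperbolicity. Since $f'(w) = \lambda e^w = f(w)$, for $w = g_s(z)$ one has $f(w) = z$, hence
\[
|g_s'(z)| \;=\; \frac{1}{|f'(g_s(z))|} \;=\; \frac{1}{|z|}.
\]
Writing $v_{n+1} = z$ and $v_j = g_{s_j} \circ \cdots \circ g_{s_n}(z) \in \LL$ for $j = 0, \ldots, n$, the chain rule gives
\[
|(g_{s_0} \circ \cdots \circ g_{s_n})'(z)| \;=\; \prod_{j=1}^{n+1} \frac{1}{|v_j|}.
\]
Since $0 \in C_1$ and $C_1$ is open, there is $d > 0$ with $|w| \geq d$ for every $w \in \LL$; in particular each factor $1/|v_j|$ is at most $1/d$. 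It therefore suffices to show that $\prod_{j=1}^{n+1} |v_j|$ grows exponentially in $n$.

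To prove this I would fix a large $R > 1$ and split the indices $j \in \{1,\ldots,n+1\}$ into \emph{escaping} ones with $|v_j| \geq R$ (each contributing a factor $\geq R$ to the product) and \emph{bounded} ones with $v_j \in K := \LL \cap \{d \leq |z| \leq R\}$. The key point is that, restricted to blocks of bounded indices, the composition of inverse branches is uniformly contracting. To obtain this, I would exploit that $f$ is hyperbolic: the singular value $0$ belongs to the attracting basin, so the post-singular set $P := \overline{\{f^n(0):n\geq 0\}}$ is a compact subset of the basin, and the domain $\Omega := \C \setminus P$ is hyperbolic. Moreover $f^{-1}(\Omega) \subsetneq \Omega$ and $f\colon f^{-1}(\Omega) \to \Omega$ is a holomorphic covering, so by Schwarz--Pick every inverse branch of $f$ is a strict contraction in the hyperbolic metric $\rho_\Omega$. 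A compactness argument then upgrades this to a uniform contraction factor $q \in (0,1)$ when both the argument and its image sit in $K$, where in addition $\rho_\Omega$ is comparable to the Euclidean metric. This gives, for any block of $m$ consecutive bounded indices, a contribution of at most $C q^m$ to $\prod 1/|v_j|$.

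Choosing $R$ large enough that $1/R < q$ and combining the two regimes yields $|(g_{s_0} \circ \cdots \circ g_{s_n})'(z)| \leq c A^{-n}$ with $A = q^{-1} > 1$, as desired. The main obstacle is precisely the uniform hyperbolic contraction on $K$ together with the comparison of $\rho_\Omega$ with the Euclidean metric there; these are exactly the ingredients of the cited arguments in \cite{McM} and \cite{UZreal}, which is why the lemma is treated as folklore.
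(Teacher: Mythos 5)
The paper gives no actual proof of this lemma; it merely says it ``follows from estimations of hyperbolic metric'' and points to Proposition~6.1 of \cite{McM} and Proposition~2.1 of \cite{UZreal}. Your overall strategy---passing to the hyperbolic metric of $\Omega=\C\setminus P$ and invoking Schwarz--Pick for the covering $f\colon f^{-1}(\Omega)\to\Omega$---is exactly the direction the authors indicate, and your opening steps (the identity $|g_s'(z)|=1/|z|$, the chain rule rewriting of the derivative as $\prod 1/|v_j|$, and the lower bound $|v_j|\geq d$) are all correct.

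However, the pivotal step is asserted but not justified, and as stated it has a genuine gap. You claim that a maximal block of $m$ consecutive bounded indices contributes at most $Cq^m$ to $\prod 1/|v_j|$. Tracking the Schwarz--Pick estimate through such a block one gets
\[
\prod_{j=a}^{a+m-1}\frac{1}{|v_j|}\;\le\;\frac{\rho_\Omega(v_{a+m-1})}{\rho_\Omega(v_{a-1})}\,\prod_{j=a-1}^{a+m-2}\frac{1}{\lambda(v_j)},
\qquad \lambda(w):=\frac{\rho_{f^{-1}(\Omega)}(w)}{\rho_\Omega(w)}\ge 1,
\]
so the exponential gain $\lambda_0^{-(m-1)}$ indeed appears, but so does the prefactor $\rho_\Omega(v_{a+m-1})/\rho_\Omega(v_{a-1})$. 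The numerator is under control (it corresponds to a point of $K$), but $v_{a-1}$ is by construction an escaping point, where $\rho_\Omega(v_{a-1})\asymp (|v_{a-1}|\log|v_{a-1}|)^{-1}$ is tiny; the ratio is then of size $|v_{a-1}|\log|v_{a-1}|$, which is unbounded. Pairing this with the single escaping factor $1/|v_{a-1}|$ still leaves an uncontrolled $\log|v_{a-1}|$. Moreover, even if each block did contribute $Cq^m$, you would be multiplying a constant $C>1$ once per block, and the number of blocks can grow linearly in $n$, destroying the exponential estimate. So the ``combine the two regimes'' sentence papers over the real difficulty rather than resolving it.

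What actually closes the gap (and is presumably the content of the cited propositions) is to avoid cutting the orbit into blocks and instead telescope globally: $\prod_{j=1}^{n+1}|v_j|^{-1}=\dfrac{\rho_\Omega(z)}{\rho_\Omega(v_0)}\prod_{j=0}^{n}\lambda(v_j)^{-1}$, then show (a) that $\lambda\ge\lambda_0>1$ \emph{uniformly} on $\LL\cap f^{-1}(\LL)$, using compactness on a bounded part together with the estimate $\lambda(w)\gtrsim\log|w|$ for $|w|$ large (this uses $\rho_\Omega(w)\asymp(|w|\log|w|)^{-1}$ and $\rho_{f^{-1}(\Omega)}(w)=\rho_\Omega(f(w))\,|f(w)|$), and (b) that the residual factor $\rho_\Omega(z)\big/\bigl(\rho_\Omega(v_0)\lambda(v_0)\lambda(v_1)\bigr)=\rho_\Omega(z)\big/\bigl(|v_1|\,|v_2|\,\rho_\Omega(v_2)\bigr)$ is bounded, which requires the relation $\Re(v_1)=\log|v_2|-\log|\lambda|$ to absorb the stray logarithm. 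These two computations, not the block decomposition, are the missing heart of the argument.
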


Now we show how the boundary of a component of the basin of attraction
is described in terms of symbolic sequences. We do not give a
precise symbolic characterisation of the boundary, but provide a
necessary and a sufficient conditions, which is enough for our purposes. To
fix notation, we consider the component $U_0$.     

Let
\begin{align*}
\Sigma_{\underline{k}} &= \{(s_0, \underline{k},
s_1, \underline{k},
\ldots): s_0, s_1, \ldots \in \Z\},\\
\Sigma'_{\underline{k}} &= \{(s_0, \underline{k},
s_1,  \underline{k},
\ldots): s_0, s_1, \ldots \in \Z \text{ and } s_n \notin \{k_1, \ldots, k_{p -
  1}\} \text{ for infinitely many } n\}.
\end{align*}

\begin{prop}\label{kody}
\[
\{z \in J(f): z \text{ has itinerary in }\Sigma'_{\underline{k}}\}
\subset \bd U_0 \subset \{z \in J(f): z
\text{ has itinerary in }\Sigma_{\underline{k}}\}.
\]
\end{prop}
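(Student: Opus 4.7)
The plan is to prove the two inclusions separately. The second (necessity) inclusion $\bd U_0 \subset \{z \in J(f): z \text{ has itinerary in } \Sigma_{\underline{k}}\}$ follows directly from forward-invariance: for $z \in \bd U_0 \subset \overline{U_0}$, continuity of $f$ gives $f^j(z) \in \overline{U_{j \bmod p}}$ for each $j \geq 0$; when $j \not\equiv 0 \pmod p$, equation~\eqref{eq:k_j} combined with closedness of $\HH_{k_{j \bmod p}}$ yields $\overline{U_{j \bmod p}} \subset \HH_{k_{j \bmod p}}$, forcing the $j$-th itinerary symbol of $z$ to equal $k_{j \bmod p}$. Hence the itinerary lies in $\Sigma_{\underline{k}}$.

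For the first (sufficiency) inclusion, the key structural observation will be the identity
\[
g_s(U_1 \setminus C_1) = U_0 \cap \HH_s \qquad \text{for every } s \in \Z,
\]
which one obtains from $f^{-1}(U_1) = U_0$ and $f^{-1}(C_1) = C_0$ (both being consequences of the universal-cover property of $f$ together with $0 \in U_1 \cap C_1$). In particular $U_0 \cap \HH_s$ is a nonempty open set for every~$s$. Moreover, since $G_{\underline{k}}$ restricts to a homeomorphism $U_0 \to U_1$ sending $C_0$ onto $C_1$, each block $G_{s, \underline{k}} = g_s \circ G_{\underline{k}}$ maps $U_0 \setminus C_0$ into $U_0 \cap \HH_s \subset U_0 \setminus C_0$.

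Given $z \in J(f)$ with itinerary $(s_0, \underline{k}, s_1, \underline{k}, \ldots) \in \Sigma'_{\underline{k}}$, choose indices $n_m \to \infty$ with $s_{n_m} \notin \{k_1, \ldots, k_{p-1}\}$; for these $n_m$ the strip $\HH_{s_{n_m}}$ is disjoint from each $C_j$ (since $C_j \subset \HH_{k_j}$), and therefore $\HH_{s_{n_m}} \subset \LL$. Pick $x_m \in U_0 \cap \HH_{s_{n_m}}$ and set
\[
w_m := G_{s_0, \underline{k}} \circ G_{s_1, \underline{k}} \circ \cdots \circ G_{s_{n_m - 1}, \underline{k}}(x_m).
\]
Iterating the inclusion $G_{s, \underline{k}}(U_0 \setminus C_0) \subset U_0$ from the previous step gives $w_m \in U_0$. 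Since this composition coincides with $g_{t_0} \circ \cdots \circ g_{t_{n_m p - 1}}$, where $(t_j)$ is the itinerary of $z$, and sends $f^{n_m p}(z) \in \HH_{s_{n_m}}$ back to $z$, while both $x_m$ and $f^{n_m p}(z)$ lie in $\HH_{s_{n_m}} \subset \LL$, the geometric contraction provided by Lemma~\ref{lem:expan} should yield $|w_m - z| \to 0$, giving $z \in \overline{U_0} \cap J(f) = \bd U_0$.

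The main obstacle will be justifying this convergence in the escaping case, where $f^{n_m p}(z)$ may have arbitrarily large modulus while $U_0 \cap \HH_{s_{n_m}}$ is bounded; then $x_m$ cannot be taken close to $f^{n_m p}(z)$ in the Euclidean sense, and naively integrating the uniform bound of Lemma~\ref{lem:expan} over a path joining them need not give smallness. One must instead exploit the additional pointwise contraction $|g_s'(w)| = 1/|w|$ in the very first inverse-branch step, which compresses the large Euclidean distance logarithmically before the uniform geometric decay takes over.
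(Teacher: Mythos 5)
Your argument for the inclusion $\bd U_0 \subset \{z : \text{itinerary} \in \Sigma_{\underline{k}}\}$ is correct and is essentially the paper's: forward images of $\bd U_0$ lie in $\overline{U_{j\bmod p}}$, which is a connected subset of $\bigcup_s\HH_s$ containing $z_{j\bmod p}$ and hence contained in $\HH_{k_{j\bmod p}}$.

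For the reverse inclusion you have the right skeleton — produce a point $w_m\in U_0$ converging to $z$ by pulling back a point $x_m\in U_0\cap\HH_{s_{n_m}}$ along the branches $G_{s_l,\underline{k}}$ — but the proposal leaves a real gap, and the fix you sketch does not close it. Two problems. First, your diagnosis that ``$U_0\cap\HH_{s_{n_m}}$ is bounded'' is wrong: $\bd\HH_{s_{n_m}}\subset\bd C_0\subset\overline{C_0}\subset U_0$, and $\bd\HH_{s_{n_m}}$ is a curve whose real part tends to $+\infty$ (Lemma~\ref{lem:graphs}), so $U_0\cap\HH_{s_{n_m}}$ is in fact unbounded. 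Second, and more seriously, the proposed remedy of exploiting the single pointwise factor $|g_s'(w)|=1/|w|$ cannot rescue the estimate when $x_m$ is chosen blindly. If you connect $x_m$ to $\zeta_m:=f^{n_mp}(z)$ by a path in $\HH_{s_{n_m}}$ and integrate, the first branch compresses the Euclidean separation only to roughly $\ln\Re(\zeta_m)$, and then Lemma~\ref{lem:expan} contributes the factor $cA^{-(n_mp-1)}$, giving $|w_m-z|\lesssim A^{-(n_mp-1)}\ln\Re(\zeta_m)$. For escaping $z$, $\Re(f^{n_mp}(z))$ can grow like a tower of exponentials of height $n_mp$, so $\ln\Re(\zeta_m)$ dwarfs $A^{n_mp}$ and the product does not tend to $0$. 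The logarithmic compression is simply too weak against the possible growth of the orbit.

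The missing idea, which is what the paper uses, is to pick the auxiliary point of $U_0$ \emph{uniformly close} to $\zeta_m$ before pulling back. Since $\HH_{s_{n_m}}+2\pi i=\HH_{s_{n_m}+1}$ is disjoint from $\HH_{s_{n_m}}$, the maximal open vertical segment $I_{n_m}\subset\HH_{s_{n_m}}$ through $\zeta_m$ has length less than $2\pi$ and its endpoints lie on $\bd\HH_{s_{n_m}}\subset U_0$. Applying the branch $G^{n_m}$ to $I_{n_m}$ and using Lemma~\ref{lem:expan} gives a curve through $z$ of length at most $2\pi\, cA^{-n_mp}\to 0$ with endpoints in $U_0$ (each $G_{s_l,\underline{k}}$ carries points of $U_0$ back into $U_0$, as in your structural observation). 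That yields $z\in\bd U_0$ without any appeal to a pointwise first-step contraction. So: your choice of $x_m$ needs to be made on $\bd\HH_{s_{n_m}}$ within bounded distance of $\zeta_m$; with that change your argument essentially becomes the paper's.
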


\begin{proof} First, we show $\bd U_0 \subset \{z \in J(f): z
\text{ has itinerary in }\Sigma_{\underline{k}}\}$. Take $z \in
\bd U_0$ and $n \geq 0$. We need to check that $f^n(z) \in \HH_{k_j}$
if $n = lp + j$ for an integer $l$ and $j \in \{1, \ldots, p -
1\}$. It is clear that $f^{lp + j}(z) \in \bd U_j$. Moreover,
$\overline{U_j}$ must be contained in $\HH_{k_j}$, since it is a
connected subset of $\bigcup_s\HH_s$ containing $z_j \in C_j \subset
\HH_{k_j}$. This shows $f^{lp + j}(z) \in \HH_{k_j}$.

Now we show $\{z \in J(f): z \text{ has itinerary in }\Sigma'_{\underline{k}}\}
\subset \bd U_0$. Take a point $z \in J(f)$ with itinerary in
$\Sigma'_{\underline{k}}$. By definition, there are arbitrarily large
$n$ such that $f^{np}(z) \in \HH_{s_n}$ and $s_n \neq k_j$ for $j \in
\{1, \ldots, p - 1\}$. This implies that for these $n$ we have
$\HH_{s_n} \subset \LL$. Let $I_n$ be a open vertical segment in
$\HH_{s_n}$ containing $f^{np}(z)$, such that both
endpoints of $I_n$ are in $\bd \HH_{s_n}$. Note that the length
of $I_n$ is less than $2\pi$ because of \eqref{eq:H_s}. 
Let $G^n$ be the branch of
$f^{-np}$ along the trajectory of $z$, i.e. $G^n =
G_{s_0, \underline{k}} \circ 
\cdots \circ G_{s_{n - 1}, \underline{k}}$. 
Since $I_n \subset \HH_{s_n}
\subset \LL$, the branch $G^n$ is defined on $I_n$.  By
Lemma~\ref{lem:expan}, the length of $G^n(I_n)$ tends to $0$
as $n \to \infty$. Moreover, by
the definition of the sets $C_j$, each map $G_{s_l, \underline{k}}$
  maps points from $U_0$ to points from $U_0$. Hence, 
the endpoints of the curve $G^n(I_n)$ are in $U_0$. Since $z \in
G^n(I_n)$, this shows $z \in \bd U_0$.
\end{proof}

\section{Escaping points in the boundary} \label{sec:esc}

In this section we prove:

\begin{prop}\label{uciekaj}
The Hausdorff dimension of the set 
$$\{z\in J(f): z \text{ has itinerary in } \Sigma_{\underline{k}}\} \cap I(f)$$
is equal to $1$.
\end{prop}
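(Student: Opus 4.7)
The proposition asserts Hausdorff dimension equal to $1$, so I would prove both bounds separately.

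For the lower bound, I would use the straight-brush structure of $J(f)$: for each suitable sequence $\underline{s} \in \Sigma'_{\underline{k}}$ there is a hair in $J(f)$, a topological arc homeomorphic to $[0,\infty)$ whose points all have itinerary $\underline{s}$ and whose non-endpoint points all lie in $I(f)$. By Proposition~\ref{kody} this hair sits inside $\bd U_0$, hence inside the target set. Any topological arc has Hausdorff dimension at least $1$, so $\dim_H \geq 1$ is immediate.

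For the upper bound I plan to adapt the Karpi\'nska / Schleicher--Zimmer covering argument to the induced conformal IFS $\{G_{s, \underline{k}}\}_{s \in \Z}$ coming from $F = f^p$. Fix $\varepsilon > 0$. Any escaping $z$ with itinerary in $\Sigma_{\underline{k}}$ satisfies $\Re(F^n(z)) \to \infty$, so for any threshold $T$ there is a smallest $n$ with $F^n(z) \in \HH_{s_n} \cap \{\Re \geq T\}$. Subdivide this half-strip into tiles $W(s_n, m) = \HH_{s_n} \cap \{T + m \leq \Re < T + m + 1\}$, $m \geq 0$, each of uniformly bounded Euclidean diameter by Lemma~\ref{lem:graphs}. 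The target set is then covered by the union, over all admissible tuples $(n, s_0, \ldots, s_n, m)$, of the pieces
\[
G_{s_0, \underline{k}} \circ \cdots \circ G_{s_{n-1}, \underline{k}}(W(s_n, m)).
\]
Koebe distortion on a neighbourhood of $W(s_n, m)$ inside $\LL$, combined with Lemma~\ref{lem:expan}, bounds the Euclidean diameter of each such piece by a constant times the product of derivatives $|g'|$ along the orbit.

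The heart of the argument is to show that $\sum \diam(\cdot)^{1+\varepsilon}$ over all admissible tuples is finite and vanishes as $T \to \infty$. Escaping orbits have $|F^n(z)|$ growing very fast once past the threshold $T$, so the product of derivatives decays correspondingly fast. One first sums over the innermost tile index $m$ (a convergent geometric series once $\varepsilon > 0$), then over $s_n$, and then iterates backwards through $s_{n-1}, \ldots, s_0$ using the uniform hyperbolic contraction from Lemma~\ref{lem:expan}. The upshot is a total bound that tends to $0$ as $T \to \infty$, giving the $(1+\varepsilon)$-dimensional Hausdorff measure zero and hence $\dim_H \leq 1 + \varepsilon$; letting $\varepsilon \to 0$ yields $\dim_H \leq 1$. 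The principal obstacle is executing this summation rigorously: one needs uniform Koebe-type distortion bounds for all the intermediate compositions $G_{s_0, \underline{k}} \circ \cdots \circ G_{s_{n-1}, \underline{k}}$, careful bookkeeping to avoid overcounting across different first-passage times $n$, and a precise accounting of how the infinite branch set $\{s_n \in \Z\}$ interacts with the explosive growth of $|F^n(z)|$ on escaping orbits. This last point is precisely what makes the estimate succeed only for $\varepsilon > 0$ and forces the dimension bound to be exactly $1$.
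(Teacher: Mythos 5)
Your proposed upper-bound strategy is structurally different from the paper's and, as sketched, has a gap that I do not think you can close without importing the paper's key estimate.

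The paper does not use a first-passage decomposition. It first replaces $\Sigma_{\underline{k}}$ by $\tilde\Sigma_{\underline{k}}$ (prepending the kneading block so that the iterated function system $\{G_{\underline{k},s}\}_{s\in\Z}$ acts cleanly), and then replaces the escaping set by the sets
\[
A_M=\{z: \text{itinerary in } \tilde\Sigma_{\underline{k}}\text{ and } f^n(z)\in W_M \text{ for \emph{all} } n\ge 0\},
\]
i.e.\ orbits that stay in $\{\Re\ge M\}$ \emph{at every step}, not merely after a first passage. Each $A_M$ is covered by nested generations $\KK_n$ obtained by pulling back the slabs $\QQ^r$, and the crux is Lemma~\ref{lem:der}:
\[
|G_{\underline{k},s}'(z)| < \frac{1}{\Re(z)\,(\pi|s|+1)}\qquad\text{for } z\in\LL\cap W_M,
\]
whose two features are both indispensable: the factor $\frac{1}{\pi|s|+1}$ makes the sum over $s\in\Z$ finite for exponent $1+\delta$, and the factor $\frac{1}{\Re(z)}\le\frac{1}{M}$ (available because \emph{every} point along the backward orbit lies in $W_M$) makes the resulting per-generation factor $\bigl(c_2/(M-d)^{1+\delta}\bigr)^{n-1}$ small when $M$ is large, so that $\sum_{K\in\KK_n}(\diam K)^{1+\delta}<1$ uniformly in $n$.

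Your sketch is missing precisely these two ingredients. You only invoke Lemma~\ref{lem:expan}, which gives a uniform bound $|(g_{s_0}\circ\cdots\circ g_{s_n})'|<cA^{-n}$ with \emph{no decay in the symbols} $s_j$; summing that over $(s_0,\dots,s_{n-1})\in\Z^n$ diverges. And even if you supply the $\frac{1}{\pi|s|+1}$ decay, your first-passage decomposition controls $\Re$ only from the first passage time onwards, so the helpful $\frac{1}{\Re(z)}$ factor is unavailable along the earlier portion of the backward orbit; without it, the per-step constant is of order $\sum_{s\in\Z}(\pi|s|+1)^{-(1+\varepsilon)}$, which blows up as $\varepsilon\to 0$ and need not be dominated by the uniform contraction $A^{-p}$, so the sum over $n$ can diverge for small $\varepsilon$. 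The intuition ``escaping orbits grow fast past $T$, so the derivatives decay fast'' also points in the wrong direction: the inverse-branch derivatives are $\sim 1/|w|$ at the point $w$ being pulled back, irrespective of how quickly the forward orbit escaped, and what must be controlled is the infinite symbol sum, not the escape rate. Finally, a single cover over all $(n,s_0,\dots,s_n,m)$ does not automatically have small mesh as $T\to\infty$ (the $n=0$ pieces have diameter bounded below), whereas the paper's scheme produces, for each $n$, a cover of $A_M$ with mesh tending to $0$ and $(1+\delta)$-sum $<1$, which is exactly what the Hausdorff measure definition requires. Your lower-bound remark (hairs minus endpoints are arcs contained in the set) is fine and matches what the paper leaves implicit.
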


In the proof of this proposition, it is more convenient to consider the set
\[
\tilde\Sigma_{\underline{k}} = \{(\underline{k},
s_1, \underline{k}, s_2, \ldots): s_1, s_2, \ldots \in \Z\}
\]
instead of $\Sigma_{\underline{k}}$. Note that the set $\{z\in J(f): z
\text{ has itinerary in } \Sigma_{\underline{k}}\} \cap I(f)$ is a countable
union of the images of $\{z\in J(f): z \text{ has itinerary in }
\tilde\Sigma_{\underline{k}}\} \cap I(f)$ under inverse branches of
$f$, so both sets have the same Hausdorff dimension. Hence, to prove
Proposition~\ref{uciekaj}, it is sufficient to show that 
\begin{equation}\label{eq:tilde}
\dim_H(\{z\in J(f): z \text{ has itinerary in }
\tilde\Sigma_{\underline{k}}\} \cap I(f)) = 1.
\end{equation}
Let 
\[
W_M = \{z: \Re(z) \geq M\}
\]
for a large integer $M > 0$ and
\[
A_M=\{z\in J(f):z \text{ has itinerary in }
\tilde\Sigma_{\underline{k}}\text{ and } f^n(z) \in W_M \text{ for
  all } n\ge 0\}.
\]
The assertion \eqref{eq:tilde} will follow immediately from 

\begin{prop}\label{prop:dimA_M}
For every $\delta>0$ there exists $M(\delta)>0$ such that for every
$M>M(\delta)$, 
\[
\dim_H A_M < 1+\delta.
\]
\end{prop}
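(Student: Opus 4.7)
The plan is to show $\HH^{1+\delta}(A_M)=0$ for $M$ sufficiently large. Applying this with $\delta$ replaced by $\delta/2$ then yields $\dim_H A_M\le 1+\delta/2 < 1+\delta$, as required. Partition $\HH_{k_1}\cap W_M$ into tiles $T_R:=\HH_{k_1}\cap\{z:R\le \Re(z)<R+1\}$ for integer $R\ge M$; by Lemma~\ref{lem:graphs}, each $T_R$ has uniformly bounded diameter. For each $n\ge 1$, write $G_n^{(s_1,\ldots,s_n)}:=G_{\underline k,s_1}\circ\cdots\circ G_{\underline k,s_n}$. Since $f^{np}(z)\in\HH_{k_1}\cap W_M$ for every $z\in A_M$, we have the cover
\[
A_M\;\subset\;\bigcup_{(s_1,\ldots,s_n)\in\Z^n,\,R\ge M}\,G_n^{(s_1,\ldots,s_n)}(T_R),
\]
and a standard Koebe distortion argument (valid because the tiles sit inside $\LL$, well away from the singular value $0$ in every preimage step) gives $\diam(G_n^{(s_1,\ldots,s_n)}(T_R))\lesssim|(G_n^{(s_1,\ldots,s_n)})'(w^*_R)|$ for any chosen $w^*_R\in T_R$.

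The essential derivative formula follows from the chain rule and the identity $|f'(w)|=|f(w)|$ for $f(w)=\lambda e^w$:
\[
|(G_n^{(s_1,\ldots,s_n)})'(w)|\;=\;\prod_{m=1}^{np}\frac{1}{|f^m(z)|},\qquad z=G_n^{(s_1,\ldots,s_n)}(w).
\]
Tracing the orbit of $z$ backward from $|f^{np}(z)|=|w|$ via $\Re f^{m-1}(z)=\log(|f^m(z)|/|\lambda|)$, the last block of iterates is controlled by iterated logarithms of $\max(|w|,2\pi|s_n|)$: explicitly, $|f^{np-1}(z)|=|g_{s_n}(w)|\approx\max(\log|w|,2\pi|s_n|)$, and subsequent backward iterates are $\log$'s of the previous one. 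For the earlier iterates ($m\le(n-1)p$), the orbit condition $f^m(z)\in W_M$ for $z\in A_M$ gives $|f^m(z)|\ge|\lambda|e^M$.

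Introduce the transfer operator $(\LL\phi)(w):=\sum_{s\in\Z}|G_{\underline k,s}'(w)|^{1+\delta}\phi(G_{\underline k,s}(w))$. A case analysis $|s|\le\log|w|$ versus $|s|>\log|w|$, exploiting precisely the iterated-log factor $|g_s(w)|^{-1}\approx\max(\log|w|,|s|)^{-1}$ (and further iterated-log factors when $p\ge 3$) furnished by the kneading block, gives the decay
\[
(\LL\mathbf 1)(w)\;\le\; C\,|w|^{-(1+\delta)}(\log|w|)^{-\delta},
\]
whence $\sup_{|w|\ge|\lambda|e^M}(\LL\mathbf 1)(w)\le\eta(M)\to 0$ as $M\to\infty$, so $\eta(M)<1$ for $M$ large. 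Iterating along the admissible backward orbits (those for which every $v_k$ remains in $W_M$ --- the only relevant ones for covering $A_M$) yields $(\LL^n\mathbf 1)(w^*_R)\le\eta(M)^{n-1}(\LL\mathbf 1)(w^*_R)\le C\eta(M)^{n-1}R^{-(1+\delta)}(\log R)^{-\delta}$. Summing over the tiles,
\[
\sum_{R\ge M}(\LL^n\mathbf 1)(w^*_R)\;\le\;C\eta(M)^{n-1}\sum_{R\ge M}R^{-(1+\delta)}(\log R)^{-\delta}\;\le\;C'\eta(M)^{n-1},
\]
which tends to $0$ as $n\to\infty$; since the cylinder diameters shrink to $0$ by Lemma~\ref{lem:expan}, we conclude $\HH^{1+\delta}(A_M)=0$.

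The main obstacle lies in combining the infinite cylinder sum over $s_1,\ldots,s_n\in\Z$ with the infinite tile sum over $R$. The naive single-branch bound $|g_s'(w)|=1/|w|$ is independent of $s$, so $\sum_s|g_s'(w)|^{1+\delta}$ already diverges: treating the inverse branches of $f$ one at a time is hopeless. The rescue comes from the block structure of $G_{\underline k,s}$: the extra $p-1\ge 1$ inverse branches supplied by the kneading introduce the factor $|g_s(w)|^{-1}\approx\max(\log|w|,|s|)^{-1}$ (plus further iterated-log factors for $p\ge 3$), precisely the summability improvement needed for both the transfer-operator bound and the tile sum to close. The hypothesis $p>1$ therefore enters in an essential way, paralleling the fact that $\dim_H\bd U=2$ when $p=1$.
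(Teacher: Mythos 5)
Your proposal is correct and follows essentially the same route as the paper: both arguments cover $A_M$ by unit-width tiles in $\Re$-coordinate inside $\HH_{k_1}\cap W_M$, pull them back along the composite branches $G_{\underline k,s_n}\circ\cdots\circ G_{\underline k,s_1}$, and exploit the fact that the kneading block contributes a derivative factor of the form $|g_s(w)|^{-1}\asymp(\log|w|+|s|)^{-1}$ (the paper's Lemma~\ref{lem:der} packages this as $|G'_{\underline k,s}(z)|<\frac{1}{\Re(z)(\pi|s|+1)}$), which is precisely what makes the double sum over tiles and over $s_1,\dots,s_n$ converge with exponent $1+\delta$. Your transfer-operator phrasing and the sharper iterated-log bound for $p\ge 3$ are a cosmetic repackaging of the paper's direct summation; the paper is content with the cruder bound $(\pi|s|+1)^{-1}$ since that already gives a convergent $\sum_s$ with a prefactor that can be beaten by taking $M$ large.

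Two small points worth tightening if you write this up fully. First, your tiles $T_R=\HH_{k_1}\cap\{R\le\Re<R+1\}$ should be intersected with $\LL$ (or replaced by their connected components inside $\LL$, as the paper does with $\QQ^r$): the branches $g_s$ are defined only on $\C\setminus C_1$, and $\HH_{k_1}$ contains $C_1$; Lemma~\ref{lem:graphs} guarantees each such tile then splits into at most $p$ pieces, which is all that is needed. Second, the step $(\LL^n\mathbf 1)(w^*_R)\le\eta(M)^{n-1}(\LL\mathbf 1)(w^*_R)$ needs the intermediate backward iterates to lie in a half-plane where the one-step bound holds; you correctly note that only branches meeting $W_M$ are needed for the cover, and since all cylinders have diameter bounded by a fixed $d$ (from \eqref{eq:diamK} and monotonicity), the relevant intermediate points have $\Re\ge M-d$, which is what Corollary~\ref{cor:derK} records and what makes the iteration legitimate — this deserves to be said explicitly.
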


The rest of this section is devoted to the proof of
Proposition~\ref{prop:dimA_M}. 

\begin{defn*}
Let
\[
Q^r_s = \LL \cap \HH_s \cap \{z: r\le \Re(z)<r+1\}
\]
for $r, s \in \Z$, $r \geq M$ and let
$\QQ^r$ be the family of all
connected components of $Q^r_{k_1}$. 
\end{defn*}

\begin{rem}\label{liczba} 
Note that if $M$ is sufficiently large, then by Lemma~\ref{lem:graphs}, 
each family $\QQ^r$ consists of at most $p$ sets.
\end{rem}

Let 
\[
\KK_0 = \bigcup_{r = M}^\infty\QQ^r
\]
and for $n \geq 0$ define inductively
\[
\KK_{n + 1} = \{G_{\underline{k}, s}(K): K \in \KK_n, s \in
\Z \text{ and } G_{\underline{k}, s}(K) \cap W_M \neq \emptyset\}.
\]
Note that if $x\in A_M$, then for every $n\ge 0$ we have $f^{np}(x)
\in \LL \cap \HH_{k_1} \cap 
W_M$, so $f^{np}(x)$ belongs to some set $K_0 \in \KK_0$. Since
\[
\LL \cap \HH_{k_1} \cap W_M = \bigcup_{K_0 \in \KK_0} K_0,
\]
this implies that the collection
$\KK_n$ is a cover of $A_M$. 

Fix $\delta > 0$. We will show that 
\begin{equation}\label{eq:diamKn}
\sup \{\diam K: K \in \KK_n\} \to 0 \text{ as } n\to\infty.
\end{equation}
and 
\begin{equation}\label{eq:K}
\sum_{K \in \KK_n}  (\diam K)^{1 +
  \delta} < 1
\end{equation}
for every $n \geq 0$, which will prove Proposition~\ref{prop:dimA_M} by
the definition of the Hausdorff measure. 

To show \eqref{eq:diamKn} and \eqref{eq:K}, we first prove two simple lemmas.

\begin{lem}\label{lem:Re}
For every sufficiently large $M$ there exists $M_1 > 0$, such that
$M_1 \to \infty$ as $M \to \infty$, and for every $z \in \LL \cap
W_M$ and $s \in \Z$,  
\[
f^j(G_{\underline{k}, s}(z)) \in W_{M_1}
\]
for every $j = 1, \ldots, p - 1$. 
\end{lem}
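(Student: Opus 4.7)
The plan is to exploit the fact that each inverse branch has the explicit form $g_s(w) = \log(w/\lambda) + 2\pi i s$ for a suitable branch of logarithm (cf.\ the proof of Lemma~\ref{lem:graphs}), so that
\[
\Re(g_s(w)) = \log|w| - \log|\lambda|,
\]
a lower bound independent of $s$. Combined with $f \circ g_a = \id$ on $\LL$, this gives, for $j = 1,\ldots, p-1$,
\[
f^j(G_{\underline{k}, s}(z)) = g_{k_{j+1}} \circ \cdots \circ g_{k_{p-1}} \circ g_s(z).
\]
Setting $u_0 = z$, $u_1 = g_s(z)$, and $u_l = g_{k_{p-l+1}}(u_{l-1})$ for $2 \le l \le p-1$, we have $f^j(G_{\underline{k},s}(z)) = u_{p-j}$, so it suffices to produce a lower bound on $\Re(u_l)$ that is uniform in $s$ and tends to $+\infty$ with $M$.

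I would then argue by induction on $l$. The base case is immediate: since $|z| \ge \Re(z) \ge M$, we get $\Re(u_1) \ge \log M - \log|\lambda|$, which is arbitrarily large once $M$ is large. For the inductive step, once $\Re(u_{l-1})$ is positive and large, the elementary inequality $|u_{l-1}| \ge \Re(u_{l-1})$ yields $\Re(u_l) \ge \log \Re(u_{l-1}) - \log|\lambda|$. Iterating the increasing map $\phi(x) = \log x - \log|\lambda|$ (defined for $x > |\lambda|$), one obtains $\Re(u_l) \ge \phi^l(M)$ for $l = 1,\ldots, p-1$. Since $p$ is fixed, the worst bound $M_1 := \phi^{p-1}(M)$ still tends to $+\infty$ as $M \to \infty$, and this is exactly the $M_1$ the lemma asks for.

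The one point that requires a remark is that $u_1$ need not lie in any bounded horizontal strip, because the index $s$ ranges over all of $\Z$ and so $|\Im(u_1)|$ can be arbitrarily large. This, however, is not an obstacle: it only enlarges $|u_1|$ and hence improves the bound on $\Re(u_2)$. For $l \ge 2$ the index $k_{p-l+1}$ lies in the finite set $\{k_1,\ldots, k_{p-1}\}$, so once $M$ has been chosen large enough, Lemma~\ref{lem:graphs} guarantees that $|\Im(u_l)|$ is bounded by a constant depending only on $\underline{k}$ and $\lambda$; in particular $|u_l| \ge \Re(u_l)$ is preserved throughout the induction. Beyond this book-keeping there is no substantive difficulty: the lemma follows from a straightforward iterated-logarithm estimate, and the main content is simply that $p$ is finite so $\phi$ is iterated only boundedly many times.
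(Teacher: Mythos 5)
Your proof is correct and follows the same approach as the paper, which simply records the key inequality $\Re(g_s(z)) = \ln|z| - \ln|\lambda| \geq \ln(\Re(z)) - \ln|\lambda|$ and leaves the iteration implicit; you have just spelled out the induction. One small remark: the paragraph about controlling $|\Im(u_l)|$ via Lemma~\ref{lem:graphs} is unnecessary, since the inequality $|u_l| \geq \Re(u_l)$ is a trivial fact about complex numbers and does not depend on any bound on the imaginary part, so the argument is even simpler than you make it sound.
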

\begin{proof}
It it sufficient to notice that $\Re(g_s(z))  = \ln|z| - \ln|\lambda|
\geq \ln(\Re(z)) - \ln|\lambda|$.
\end{proof}

\begin{lem}\label{lem:der}
For every $z \in \LL \cap W_M$ for sufficiently large $M$, 
\[
|G_{\underline{k}, s}'(z)| < \frac{1}{\Re(z)(\pi |s| + 1)}.
\]
\end{lem}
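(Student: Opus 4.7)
The plan is to compute $G'_{\underline{k},s}$ explicitly by the chain rule, use the fact that each inverse branch $g_t$ satisfies $g_t'(w) = 1/w$ (since $f(g_t(w))=w$ gives $g_t'(w) = 1/f'(g_t(w)) = 1/(\lambda e^{g_t(w)}) = 1/w$), and then estimate the resulting factors using the geometric information collected in Lemma~\ref{lem:graphs} and Lemma~\ref{lem:Re}.

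More precisely, set $w_0 = z$, $w_1 = g_s(z)$, and for $j = 2,\dots,p-1$, $w_j = g_{k_{p-j+1}}(w_{j-1})$, so that the chain rule yields
\[
|G'_{\underline{k},s}(z)| \;=\; \prod_{j=0}^{p-1}\frac{1}{|w_j|}.
\]
The factor $1/|w_0|$ contributes $\leq 1/\Re(z)$. For $j = 1,\dots, p-1$, Lemma~\ref{lem:Re} applied to $z$ (rather, the observation in its proof that each $g_t$ increases real parts to at least $\ln(\Re(z))-\ln|\lambda|$, iterated through the finitely many $g_{k_i}$'s) gives $\Re(w_j) \geq M_1$ with $M_1 \to \infty$ as $M\to\infty$. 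So $|w_j| \geq M_1$ for $j=1,\dots,p-1$, which already yields a large constant factor $M_1^{p-1}$ in the denominator.

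To extract the $|s|$-dependence, I would look more closely at $w_1 = g_s(z) \in \HH_s$. By Lemma~\ref{lem:graphs}, for $M$ large enough the region $\HH_s \cap W_M$ lies in the horizontal strip with $|\Im(w_1) - (2s+1)\pi + \Arg(\lambda)| < 2\pi$, hence
\[
|w_1| \;\geq\; |\Im(w_1)| \;\geq\; 2\pi|s| - C_0
\]
for some constant $C_0$ depending only on $\lambda$. Splitting into two cases: for $|s|\geq 1$, combine this with $|w_j|\geq M_1$ ($j\geq 2$) to get
\[
\prod_{j=1}^{p-1}|w_j| \;\geq\; (2\pi|s|-C_0)\,M_1^{p-2} \;>\; \pi|s|+1,
\]
which holds once $M_1$ is sufficiently large (so that $M_1^{p-2}\geq 1$ and the slack $2\pi|s|-C_0 > \pi|s|+1$ kicks in). For $s=0$, we instead use the stronger lower bound $|w_1|\geq M_1$, giving $\prod_{j=1}^{p-1}|w_j|\geq M_1^{p-1} > 1 = \pi|0|+1$, again for $M$ (and hence $M_1$) large. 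Combining with the $1/\Re(z)$ factor yields the claimed inequality.

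The only mildly delicate point is the slack constant $C_0$ coming from Lemma~\ref{lem:graphs}: one must choose $M$ large enough so that $M_1$ absorbs both $C_0$ and the factor $2$ we lose in replacing $2\pi|s|-C_0$ by $\pi|s|+1$. There is no real obstacle, since the lemma is ultimately an iteration of the trivial estimate $g_t'(w) = 1/w$ plus the strip-location from Lemma~\ref{lem:graphs}; one just has to take $M$ large enough that all the implicit constants fall in line.
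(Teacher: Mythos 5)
Your approach is essentially the paper's: same chain-rule factorization, same use of $|g_t'(w)|=1/|w|$, same sources of information (Lemma~\ref{lem:graphs} for the strip location of $\HH_s$, Lemma~\ref{lem:Re} for pushing real parts up). The only divergence is in how you extract the $|s|$-dependence from the factor $1/|w_1| = 1/|g_s(z)|$, and there your case-split has a real gap.

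For $|s|\geq 1$ you bound $|w_1|\geq 2\pi|s|-C_0$ and claim that $(2\pi|s|-C_0)\,M_1^{p-2} > \pi|s|+1$ ``once $M_1$ is sufficiently large.'' But $C_0$ is a fixed constant (of order $2\pi$, coming from the width of the strip containing $\HH_s$), and when $p=2$ the exponent $p-2$ is zero, so $M_1^{p-2}=1$ and increasing $M$ buys you nothing. Then for, say, $|s|=1$ you are asking for $2\pi - C_0 > \pi+1$, which may simply be false. Even for $p>2$, if $2\pi|s|-C_0\leq 0$ (small $|s|$), your lower bound on the product is non-positive and hence vacuous. So the argument as written does not cover a finite range of small nonzero $|s|$.

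The fix is exactly what the paper does: instead of using only the imaginary part of $w_1$, use both. Since $|w_1|\geq\tfrac12\bigl(|\Re(w_1)|+|\Im(w_1)|\bigr)$, and $\Re(w_1)\geq \ln M - \ln|\lambda|$ while $|\Im(w_1)|\geq 2\pi|s|-c$, one gets $|w_1| \geq \tfrac12(\ln M + 2\pi|s| - c)$, and the inequality $\tfrac{2}{\ln M + 2\pi|s| - c} < \tfrac{1}{\pi|s|+1}$ reduces to $\ln M > c+2$, uniformly in $s$. This removes the case split entirely and closes the gap; the remaining $p-2$ factors are each bounded by $1/M_1 \leq 1$ exactly as you say.
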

\begin{proof} By the chain rule,
\[
|G_{\underline{k}, s}'(z)| =  |g_s'(z)||g_{k_{p - 1}}'(g_s(z))|
|(g_{k_1} \circ \cdots \circ g_{k_{p - 2}})'(g_{k_{p - 1}} \circ g_s(z))|.
\]
We have $|g_s'(z)| = 1/|z| \leq 1/\Re(z)$. 
Moreover, by Lemma~\ref{lem:graphs}, there exists a constant $c > 0$, such that 
\[
|g_{k_{p - 1}}'(g_s(z))| = \frac{1}{|g_s(z)|} \leq 
\frac{2}{(|\Re(g_s(z))|+|\Im(g_s(z))|)} <
\frac{2}{\ln M + 2\pi |s| - c} < \frac{1}{\pi |s| + 1}
\]
for large $M$. Finally, by Lemma~\ref{lem:Re}, 
\[
|(g_{k_1} \circ \cdots \circ g_{k_{p - 2}})'(g_{k_{p - 1}} \circ g_s(z))| 
= \prod_{j = 1}^{p - 2}\frac{1}{|f^j(G_{\underline{k}, s}(z))|} \leq 
\prod_{j = 1}^{p - 2}\frac{1}{\Re(f^j(G_{\underline{k}, s}(z)))} <
\frac{1}{M_1^{p - 2}} \leq 1,
\]
if $M$ is sufficiently large. 
\end{proof}

Note that by Lemma~\ref{lem:graphs}, for every
$K_0 \in \KK_0$ we have 
\begin{equation} \label{eq:diamK}
\diam K_0 < \sqrt{1 + 16\pi^2}.
\end{equation}
Moreover, by Lemma~\ref{lem:der}, for every $K \in \KK_n$,
\[
\diam G_{\underline{k}, s}(K) < \frac{\diam K}{M - \diam K},
\]
which shows \eqref{eq:diamKn} by a simple induction. Moreover, \eqref{eq:diamKn}
and Lemma~\ref{lem:der} imply immediately the following fact.

\begin{cor} \label{cor:derK}
For every $K \in \KK_n$ and every $z
\in K$, 
\[
|G_{\underline{k}, s}'(z)| < \frac{1}{(M - d)(\pi |s| + 1)}
\]
for some constant $d > 0$. \hfill\qed
\end{cor}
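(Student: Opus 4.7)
The plan is to combine the diameter bound \eqref{eq:diamK} and the inductive inequality $\diam G_{\underline{k},s}(K) < \diam K / (M-\diam K)$ to produce a uniform constant $d$ bounding $\diam K$ for every $K \in \bigcup_n \KK_n$, and then use the fact that every $K \in \KK_n$ meets $W_M$ to force $\Re(z) \ge M-d$ for all $z \in K$. Once this real-part lower bound is in hand, the corollary is an immediate application of Lemma~\ref{lem:der}.

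First, I would verify that $K \cap W_M \neq \emptyset$ for every $K \in \KK_n$, which is immediate from the definition: for $n = 0$ the sets of $\KK_0$ lie in some $Q^r_{k_1}$ with $r \ge M$, so $K_0 \subset W_M$; and by construction an element $G_{\underline{k},s}(K)$ of $\KK_{n+1}$ is included only if it intersects $W_M$. Next, I would establish the uniform diameter bound. By \eqref{eq:diamK}, $\diam K_0 < \sqrt{1+16\pi^2}$ for $K_0 \in \KK_0$. For $M$ sufficiently large (say $M > 2\sqrt{1+16\pi^2}$), the map $t \mapsto t/(M-t)$ is a contraction on $[0, \sqrt{1+16\pi^2}]$ sending this interval into itself, so the inequality $\diam G_{\underline{k},s}(K) < \diam K/(M-\diam K)$ gives by induction on $n$ that $\diam K < d := \sqrt{1+16\pi^2}$ for every $K \in \bigcup_{n\ge 0} \KK_n$.

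With these two facts, if $z \in K \in \KK_n$ then picking any $w \in K \cap W_M$ we get $\Re(z) \ge \Re(w) - |z-w| \ge M - \diam K > M - d$. Now Lemma~\ref{lem:der} applies (we may arrange $M$ so large that $M-d$ still exceeds the threshold required by that lemma), yielding
\[
|G_{\underline{k},s}'(z)| < \frac{1}{\Re(z)(\pi|s|+1)} < \frac{1}{(M-d)(\pi|s|+1)},
\]
which is precisely the claimed estimate.

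There is really no obstacle here beyond bookkeeping; the only point needing a brief check is that the map $t \mapsto t/(M-t)$ indeed preserves a bounded interval independent of $n$, so the induction producing the uniform $d$ closes. After that the corollary reduces mechanically to Lemma~\ref{lem:der}.
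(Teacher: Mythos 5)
Your proposal is correct and matches the paper's (largely implicit) argument exactly: use the uniform diameter bound $\diam K < d := \sqrt{1+16\pi^2}$ (which follows by induction from \eqref{eq:diamK} and the fact that $t \mapsto t/(M-t)$ maps $[0,\sqrt{1+16\pi^2}]$ into itself once $M$ is large), combine it with $K \cap W_M \neq \emptyset$ to force $\Re(z) > M - d$, and then invoke Lemma~\ref{lem:der} with $M-d$ in place of $M$. The paper's one-line justification (``\eqref{eq:diamKn} and Lemma~\ref{lem:der} imply immediately'') is exactly this bookkeeping; you have just made it explicit.
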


Now we are ready to prove $\eqref{eq:K}$. Using 
Remark~\ref{liczba}, Lemma~\ref{lem:der} and Corollary~\ref{cor:derK}
we get, for some constants $c_1, c_2 > 0$, 
\begin{multline*}
\sum_{K \in \KK_n} (\diam K)^{1 + \delta} = 
\sum_{K_0 \in \KK_0} \sum_{\substack{s_1, \ldots, s_n \in
    \Z:\\ G_{\underline{k}, s_j} \circ \cdots \circ G_{\underline{k},
      s_n}(K_0) \in \KK_{n - j + 1}\\
 \text{for } j = 1, \ldots, n}} (\diam G_{\underline{k}, s_1} \circ
\cdots \circ G_{\underline{k}, s_n}(K_0))^{1 + \delta}\\
\leq \sum_{r = M}^\infty
\sum_{K_0 \in \QQ^r} \sum_{\substack{s_1, \ldots, s_n \in
    \Z:\\ G_{\underline{k}, s_j} \circ \cdots \circ G_{\underline{k},
      s_n}(K_0) \in \KK_{n - j + 1}\\
 \text{for } j = 1, \ldots, n}} \sup_{z \in K_0} \left(|G'_{\underline{k},
  s_n}(z)| \prod_{j = 1}^{n - 1} |G'_{\underline{k},
  s_j}(G_{\underline{k}, s_{j + 1}} \circ \cdots \circ
G_{\underline{k}, s_n}(z))|\right)^{1 + \delta}\\
< p \sum_{r = M}^\infty
\sum_{s_1, \ldots, s_n \in  \Z} \left(\frac{1}{r(\pi |s_n| + 1)}
\prod_{j = 1}^{n - 1} \frac{1}{(M - d)(\pi |s_j| + 1)}\right)^{1 + \delta}\\
\leq p \sum_{r = M}^\infty \frac{1}{r^{1 + \delta}}
\left(\frac{1}{(M - d)^{1 + \delta}}\right)^{n - 1} \left(\sum_{s \in \Z}
\frac{1}{(\pi |s| 
  + 1)^{1 + \delta}}\right)^n < \frac{pc_1}{M^\delta} 
\frac{c_2^n}{(M - d)^{(1 + \delta)(n - 1)}} < 1
\end{multline*}
if $M$ is sufficiently large, which shows \eqref{eq:K} and ends the proof of 
Proposition~\ref{prop:dimA_M}. 

\section{Non-escaping points in the boundary}\label{sec:non-esc}

\begin{prop}\label{prop:nieuciekaj}
The Hausdorff dimension of the set 
\[
\{z\in J(f): z \text{ has itinerary in } \Sigma'_{\underline{k}} \text{
  and the forward orbit of $z$ under $f$ is bounded}\}
\]
is greater than $1$. 
\end{prop}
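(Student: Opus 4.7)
The plan is to exhibit a compact forward-invariant subset $X_S \subset \bd U_0$ with bounded $f$-orbits and $\dim_H(X_S) > 1$, realized as the limit set of a finite conformal iterated function system built from the inverse branches $G_{s,\underline{k}}$ of $f^p$. The key observation is that the ``first-return'' system through $\HH_{k_1}$ has branches indexed by all of $\Z$, so restricting to a sufficiently large finite set $S \subset \Z \setminus \{k_1, \ldots, k_{p-1}\}$ still provides enough contractions to push the topological pressure at $t = 1$ above zero.

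First I would fix a finite set $S \subset \Z \setminus \{k_1, \ldots, k_{p-1}\}$ (with $|S|$ to be chosen large later) and pick a compact set $X \subset \LL$ such that $\bigcup_{s \in S} G_{s, \underline{k}}(X) \subset X$; such an $X$ exists because Lemma~\ref{lem:expan} provides uniform contractions, so iterating on any sufficiently large compact starting set yields an invariant attractor. The branches $G_{s, \underline{k}}: \C \setminus C_1 \to \HH_s$ are holomorphic, their images $G_{s, \underline{k}}(X) \subset \HH_s$ are pairwise disjoint (the open set condition), and the compositions are well-defined since each $\HH_s$ with $s \neq k_1$ avoids $C_1$. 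The limit set
\[
X_S = \bigcap_{n \geq 0} \bigcup_{(s_0, \ldots, s_{n-1}) \in S^n} G_{s_0, \underline{k}} \circ \cdots \circ G_{s_{n-1}, \underline{k}}(X)
\]
consists of points whose itinerary has the form $(s_0, \underline{k}, s_1, \underline{k}, \ldots)$ with every $s_n \in S$; since $S$ is disjoint from $\{k_1, \ldots, k_{p-1}\}$, such itineraries lie in $\Sigma'_{\underline{k}}$, so Proposition~\ref{kody} gives $X_S \subset \bd U_0$. Compactness of $X_S$ and continuity of $f$ ensure that every forward $f$-orbit in $X_S$ is bounded.

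The central step is showing $\dim_H(X_S) > 1$ for $|S|$ large. By the chain rule,
\[
|G'_{s, \underline{k}}(z)| = |g'_s(G_{\underline{k}}(z))| \cdot |G'_{\underline{k}}(z)| = \frac{|G'_{\underline{k}}(z)|}{|G_{\underline{k}}(z)|},
\]
and this quantity is \emph{independent of $s$}, because $g'_s(w) = 1/w$ for every branch $g_s$. Since $G_{\underline{k}}(X) \subset \overline{C_1}$ is bounded (and bounded away from $0$, as a zero of $G_{\underline{k}}$ would correspond to an iterated preimage of $0$, which cannot lie in $\LL$), and since $G'_{\underline{k}}$ is a non-vanishing holomorphic function bounded below on the compact set $X$, there is a constant $c_0 > 0$ with $|G'_{s, \underline{k}}(z)| \geq c_0$ for all $z \in X$ and all $s \in S$. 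Taking $|S| > 1/c_0$, I obtain
\[
\inf_{z \in X} \sum_{s \in S} |G'_{s, \underline{k}}(z)| \geq |S|\, c_0 > 1,
\]
hence the topological pressure of the IFS at $t = 1$ satisfies $P(1) > 0$. Bowen's formula for finite conformal iterated function systems with the open set condition then yields $\dim_H(X_S) = t^* > 1$, where $t^*$ is the unique root of $P$. Since $X_S$ is a hyperbolic conformal invariant set for $f^p$, this simultaneously establishes the stronger statement about hyperbolic dimension mentioned in the remark after Theorem~B.

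The main obstacle is really the bookkeeping: producing the compact invariant $X$ on which everything works cleanly and confirming the uniform lower bound on $|G'_{\underline{k}}|/|G_{\underline{k}}|$ over $X$ (the key geometric input being that $G_{\underline{k}}$ maps $X$ into the bounded region $C_1$ away from $0$). Once this is in place, the Bowen/Moran formalism for finite conformal iterated function systems runs in standard fashion.
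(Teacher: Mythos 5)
The observation that $|G'_{s,\underline{k}}(z)| = |G'_{\underline{k}}(z)|/|G_{\underline{k}}(z)|$ is independent of $s$ is correct and is also used in the paper (equation \eqref{eq:Gs'}). But this is precisely what makes your estimate collapse: the lower bound $c_0 = \inf_{z\in X}|G'_{s,\underline{k}}(z)|$ depends on $X$, which in turn depends on $S$, and the dependence defeats the argument. For the IFS to have an invariant compactum $X$ with $\bigcup_{s\in S}G_{s,\underline{k}}(X)\subset X$, $X$ must meet every $\HH_s$ with $s\in S$. If $z\in X\cap\HH_{s_0}$ then, by Lemma~\ref{lem:graphs}, $|\Im(z)|\ge 2\pi|s_0|-c$, while $\Re(z)=\ln|G_{\underline{k}}(z')|-\ln|\lambda|$ for some $z'\in X$ stays of size at most $O(\ln\ln \max_{s\in S}|s|)$; hence $\sup_{z\in X}|z|\asymp 2\pi\max_{s\in S}|s|$. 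Writing $|G'_{s,\underline{k}}(z)|=\prod_{j=0}^{p-1}|u_j|^{-1}$ with $u_0=z$ and $|u_{j+1}|\asymp\ln|u_j|$, one gets
\[
c_0=\inf_{z\in X}\frac{|G'_{\underline{k}}(z)|}{|G_{\underline{k}}(z)|}
\lesssim \frac{1}{\max_{s\in S}|s|\,\cdot\,\ln\!\big(\max_{s\in S}|s|\big)},
\]
and since for a finite set of integers $|S|\le 2\max_{s\in S}|s|+1$, one obtains $|S|\,c_0\lesssim 1/\ln\max|s|\to 0$. So ``take $|S|>1/c_0$'' can never be satisfied: the pressure bound $P(1)\ge\ln(|S|\,c_0)$ tends to $-\infty$, not to a positive number. (A second, more cosmetic error: $G_{\underline{k}}(X)\subset\HH_{k_1}$, not $\overline{C_1}$; compactness of $G_{\underline{k}}(X)$ comes simply from continuity, not from containment in $\overline{C_1}$.)

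The paper avoids this circularity by working with a \emph{two}-step system $h_{u,s}=G_{u,\underline{k}}\circ G_{s,\underline{k}}$ restricted to a square $Q=[R/2,3R/2]^2$. The crucial point is not a uniform lower bound on a single derivative, but a counting/Koebe argument: the intermediate points $v_s=G_{s,\underline{k}}(z_R)$ all lie on one vertical line, and for each of $\sim R$ indices $u$ the image $G_{u,\underline{k}}(\ell')$ crosses $Q'$ along a curve of length $\ge R-2$, so by bounded distortion $\sum_{s:\,G_{u,\underline{k}}(v_s)\in\ell'_u}|G'_{u,\underline{k}}(v_s)|\gtrsim R$. Summing over $u$ gives a total $\gtrsim R^2$, and multiplying by the ($s$-independent) first-step derivative $\gtrsim (R(\ln R)^{p-1})^{-1}$ produces $\sum_{(u,s)\in\G}|(h_{u,s})'(z_R)|\gtrsim R/(\ln R)^{p-1}\to\infty$. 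This quadratic-in-$R$ gain from the second composition is exactly what the one-step scheme cannot reproduce, since there the contribution of all $|S|$ branches at a fixed $z$ is literally $|S|\cdot F(z)$ with $F(z)$ shrinking like $1/(|z|\,\mathrm{polylog}|z|)$. To fix your approach you would have to either pass to a two-step (or longer) composition as in the paper, or replace the $\inf$-bound on the pressure by a genuinely multiplicative estimate that takes into account that typical backward orbits do not stay at large $|s|$.
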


\begin{proof}
The basic idea of the proof is to construct a finite conformal Iterated
Function System with a compact Cantor repeller $X$ contained in the
considered set, such that
$f^{2p}(X)=X$, and to use classical Bowen's formula to estimate the
Hausdorff dimension of $X$. In fact, it is a modification of the proof
from \cite{BKZ}. For completeness, we present the main ideas of the proof. 

Let $R$ be a large positive real number.
Consider the square
$$Q=\left[\frac R2,\frac{3R}2\right]\times\left[\frac
  R2,\frac{3R}2\right].$$
Note that $Q \subset \LL$ for sufficiently large $R$, so all inverse branches
of the form $G_{u,\underline{k}}\circ G_{s,\underline{k}}$, $u, s \in
\Z$ are defined on $Q$. Moreover, $\diam Q = \sqrt{2}R$ and 
$\dist(Q, \bd \LL)> R/2 - D$ for some constant $D > 0$. Hence, by the
Koebe Lemma (see e.g. \cite{CG}), the distortion of these branches on
$Q$ is bounded by a constant independent of $R, u, s$.

Let 
$$Q_{u,s}=G_{u,\underline{k}}\circ G_{s,\underline{k}}(Q)$$
for $u, s \in \Z$ and
$$\G=\{(u,s)\in\Z^2:\ Q_{u,s}\subset Q\}.$$
Now define
$$X=\bigcap_{n =
  1}^\infty\bigcup_{i_1,\ldots,i_n}h_{i_1}\circ\ldots\circ h_{i_n}(Q), 
$$
where $h_{ij}=G_{u,\underline{k}}\circ G_{s,\underline{k}}$ for some $(u,s)\in\G$.
By construction,
\[
X\subset\{z\in J(f): z \text{ has itinerary in } \Sigma'_{\underline{k}} \text{
  and the forward orbit of $z$ under $f$ is bounded}\}
\] 
and $f^{2p}(X)=X$.

To prove Proposition~\ref{prop:nieuciekaj}, we will show that
\begin{equation}\label{eq:dimX}
\dim_H(X) > 1.
\end{equation} 
By Bowen's formula (see e.g. \cite{Bo, PU}), the
Hausdorff dimension of $X$ is determined as the unique $t\in\R$, such
that $P(t)=0$ where $P(t)$ is the pressure function: 
$$P(t)=\lim_{n\to\infty}\frac{1}{n}\ln\sum_{i_1,\ldots,i_n}\|(h_{i_1}
\circ\ldots\circ h_{i_n})'\|^t.$$  
The function $t\mapsto P(t)$ is
strictly decreasing, so to prove \eqref{eq:dimX} it is enough
to show that $P(1)>0$. We have
$$P(1)\ge \lim_{n\to\infty}\frac{1}{n}\ln\left( \inf_{z\in
Q}\sum_{(u,s)\in \G} |(G_{u,\underline{k}}\circ G_{s,\underline{k}})'(z)|\right )^n.
$$
Hence, using bounded distortion, to prove \eqref{eq:dimX} it is enough
to have
\begin{equation}\label{eq:P>}
\sum_{(u,s)\in \G} |(G_{u,\underline{k}}\circ G_{s,\underline{k}})'(z_R)| > C
\end{equation}
for a large constant $C > 0$, where $z_R$ is the centre of the square
$Q$, i.e. $z_R=R+iR$.  

Now we prove \eqref{eq:P>}. 
By Lemma~\ref{lem:graphs}, for every $z \in \HH_s$, $s \in \Z$ and $j
= 1, \ldots, 
p - 1$, we have
\begin{multline}\label{eq:|g|<}
|g_{k_j}(z)| \leq |\Re(g_{k_j}(z))| + |\Im(g_{k_j}(z))| \\< \ln|z| -
\ln|\lambda| + (2|s| + 1)\pi + |\Arg(\lambda)|
< \ln |z| + 2 \pi |s| + c_3
\end{multline}
for some constant $c_3 > 0$. Using this
inductively along the backward trajectory of $z_R$, we get
\begin{equation}\label{eq:Gs'}
|G_{s,\underline{k}}'(z_R)| = \frac{1}{|z_R||g_{k_{p - 1}}(z_R)| \cdots
  |g_{k_1} \circ \cdots \circ g_{k_{p - 1}}(z_R)|} >
  \frac{1}{\sqrt{2}R (\ln R + c_4)^{p - 1}}
\end{equation}
and
\begin{equation}\label{eq:Re<}
\Re(G_{s,\underline{k}}(z_R)) \leq |G_{s,\underline{k}}(z_R)| < \ln R + c_4 
\end{equation}
for some constant $c_4 > 0$. Note that the points 
\[
v_s=G_{s,\underline{k}}(z_R)
\]
for $s \in \Z$ are located on the same 
vertical line 
\[
\ell=\{z: \Re (z)=R_1\}.
\]
Let 
\[
\ell'=\ell\cap\{z: \Im (z) \geq R\}.
\]
In the same way as in the proof of Lemma~\ref{lem:Re}, it is easy to
see that $R_1$ is arbitrarily 
large for large $R$, so $\ell' \subset \LL$
and 
\begin{equation}\label{eq:dist}
\dist(z, \bd \LL) > D \quad \text{for every } z \in \ell'. 
\end{equation}
For $u\in\Z$ the set 
$G_{u,\underline{k}}(\ell')$ is a curve tending to
infinity and contained in $\HH_u$. Moreover, using \eqref{eq:Re<} and
\eqref{eq:|g|<} inductively along the backward trajectory of $\ell'$, we get
\[
\inf_{z\in G_{u,\underline{k}}(\ell')}\Re (z)< \ln R + c_4 <\frac R2, 
\] 
if $R$ is sufficiently large. Hence, each
$G_{u,\underline{k}}(\ell')$ intersects the strip $\{z: R/2 + 1 <
\Re(z) < 3R/2 - 1\}$ along a curve $\ell'_u$ of length at least $R - 2$. 
By Lemma~\ref{lem:graphs}, there are at least $c_5R$ such curves $\ell'_u$
contained in 
\[
Q' = \left[\frac{R}{2} + 1, \frac{3R}{2} - 1\right] \times  
\left[\frac{R}{2} + 1, \frac{3R}{2} - 1\right],
\]
for some constant
$c_5 > 0$. By \eqref{eq:dist} and the Koebe Lemma, the distortion of
$G_{u,\underline{k}}$ is uniformly bounded on every vertical segment of
endpoints $v_s, v_{s + 1}$, so 
\begin{equation} \label{eq:length<}
R - 2 \leq \length(\ell'_u) < c_6 \sum_{s:\, G_{u,\underline{k}}(v_s) \in
  \ell'_u}|G'_{u,\underline{k}}(v_s)|
\end{equation}
for some constant $c_6 > 0$. Moreover, $\diam Q = \sqrt{2} R$ and
$(G_{u,\underline{k}} \circ G_{s,\underline{k}})'(z) < 1/(\sqrt{2}R)$
for $z \in Q$ 
(the proof is analogous to the proof of Lemma~\ref{lem:der}), so
\[
\diam Q_{u, s} < 1.
\]
Hence, if
$G_{u,\underline{k}}(v_s) \in \ell'_u \subset Q'$, then 
$Q_{u, s} \subset Q$, so $(u, s) \in \G$. 
Using \eqref{eq:length<}, we conclude that 
$$\sum_{(u,s)\in \G} |G'_{u,\underline{k}}(v_s)| \geq \sum_{u:\,
  \ell'_u \subset Q'} \sum_{s: \,G_{u,\underline{k}}(v_s) \in \ell'_u}
|G'_{u,\underline{k}}(v_s)| > \frac{c_5}{c_6} R (R - 2).
$$
Combining this with \eqref{eq:Gs'}, we get 
\[
\sum_{(u,s)\in \G} |(G_{u,\underline{k}} \circ
G_{s,\underline{k}})'(z_R)| = \sum_{(u,s)\in
  \G}|G'_{u,\underline{k}}(v_s)| |G'_{s,\underline{k}}(z_R)| >
\frac{c_5}{\sqrt{2}c_6}\frac{R - 2}{ (\ln R + c_4)^{p - 1}},
\]
which shows \eqref{eq:P>} for sufficiently large $R$. This completes the proof.

\end{proof}

Note that it follows from the above proof that the set $X\cup f^p(X)$
is a conformal $f^p$-invariant Cantor set and
$|(f^p)'|>1$ on $X\cup f^p(X)$. Hence we immediately obtain the
following. 

\begin{prop}\label{prop:hyp}
The hyperbolic dimension of the set 
\[
\{z\in J(f): z \text{ has itinerary in } \Sigma'_{\underline{k}} \text{
  and the forward orbit of $z$ under $f$ is bounded}\}
\]
is greater than $1$. \qed
\end{prop}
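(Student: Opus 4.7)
The plan is to observe that the Cantor set $X$ constructed in the proof of Proposition~\ref{prop:nieuciekaj} is itself a hyperbolic subset of the set in the statement, so that $\dim_H(X) > 1$ witnesses the lower bound on the hyperbolic dimension.

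First, I would set $Y = X \cup f^p(X)$. Since $f^{2p}(X) = X$, one has $f^p(Y) = f^p(X) \cup f^{2p}(X) = Y$, so $Y$ is compact and $f^p$-invariant. The set $Y$ is contained in the set of the statement: $X$ is, by Proposition~\ref{prop:nieuciekaj}, while a point of $f^p(X)$ has itinerary obtained from a point of $X$ by dropping the initial block $(u, \underline{k})$, yielding an itinerary still in $\Sigma'_{\underline{k}}$; and every orbit in $Y$ is bounded because $Y$ is compact.

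Second, I would verify that some iterate of $f$ is uniformly expanding on $Y$ in the Euclidean metric. Since $Y \subset \LL$ and is bounded away from the singular value $0$ (points of $X$ lie in the square $Q$, and points of $f^p(X)$ lie in $\HH_s$ with real part at least the large constant $R_1$ appearing in the previous proof), Lemma~\ref{lem:expan} applied to the local inverse branches $G_{s_1, \underline{k}} \circ \cdots \circ G_{s_n, \underline{k}}$ of $f^{np}$ at points of $Y$ gives a uniform bound $|(G_{s_1, \underline{k}} \circ \cdots \circ G_{s_n, \underline{k}})'| < cA^{-np}$ with $A > 1$. Hence $|(f^{np})'| \geq A^{np}/c$ on $Y$, and for $n$ large enough this exceeds $1$ uniformly, which is the standard criterion for $Y$ to be a hyperbolic set for $f$. (This is the content of the bound $|(f^p)'| > 1$ on $X \cup f^p(X)$ noted in the remark preceding the statement.)

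Combining these observations, $Y$ is a compact, $f^p$-invariant, uniformly hyperbolic subset of the set of the statement, with $\dim_H(Y) \geq \dim_H(X) > 1$ by Proposition~\ref{prop:nieuciekaj}. By the definition of hyperbolic dimension as the supremum of Hausdorff dimensions of such subsets, the required bound follows. The main point to get right is the containment $Y \subset \{\ldots\}$, which reduces to the shift-invariance of $\Sigma'_{\underline{k}}$ under the $p$-step shift; the substantive work of producing a Cantor set $X$ with $\dim_H(X) > 1$ was already carried out in the previous proposition, so the present statement amounts to packaging that construction in the hyperbolic-dimension formalism.
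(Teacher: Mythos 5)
Your proposal takes exactly the paper's route: the paper observes, in the remark immediately preceding the proposition, that $X \cup f^p(X)$ is a conformal $f^p$-invariant Cantor set on which $|(f^p)'|>1$, and concludes immediately from $\dim_H(X)>1$. You fill in the minor details the paper leaves implicit (the itinerary of $f^p(X)$ still lies in $\Sigma'_{\underline{k}}$, uniform expansion via Lemma~\ref{lem:expan}, and $\dim_H(Y)\geq\dim_H(X)$), all of which are correct, so this is the same argument spelled out more fully.
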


\section{Conclusion} \label{sec:concl}

By Propositions~\ref{kody}, \ref{uciekaj} and \ref{prop:nieuciekaj},
the set $\bd U_0 \cap 
I(f)$ has Hausdorff dimension $1$, while $\bd U_0 \cap J_\text{bd}(f)$ has
Hausdorff dimension greater than $1$. This implies 
\[
1 < \dim_H(\bd U_0 \cap J_\text{bd}(f)) \leq \dim_H(\bd
U_0 \setminus I(f)) = \dim_H(\bd U_0).
\]
Of course, the same holds for any
component $U$ of the entire basin of attraction. This proves
the first part of Theorem~A, the lower estimate of the first part and
the second part of
Theorem~B. The remaining part is a consequence
of the second part of Theorem~B and the fact $\dim_H(J(f) 
\setminus I(f)) < 2$, which was proved in \cite{UZfiner} as
Theorem~6.1. Finally, the remark after Theorem~B
follows from Propositions~\ref{kody} and~\ref{prop:hyp}.

\end{document}